\documentclass{amsart}
\usepackage[english]{babel}
\usepackage{amsmath, amsthm, amssymb}
\usepackage{enumerate}
\usepackage{hyperref}
\usepackage{physics}
\usepackage[thinlines]{easytable}
\usepackage{algorithm}
\usepackage{algpseudocode}

\usepackage[a4paper, margin=1in]{geometry}

\usepackage{tikz}
\usetikzlibrary{arrows,shapes}
\usetikzlibrary{arrows.meta}
\usetikzlibrary{decorations.markings}
\usetikzlibrary{intersections}
\usetikzlibrary{positioning}

\theoremstyle{plain}
\newtheorem{theorem}{Theorem}

\newtheorem{lemma}[theorem]{Lemma}
\newtheorem{corollary}[theorem]{Corollary}
\theoremstyle{definition}
\newtheorem{example}[theorem]{Example}
\newtheorem*{remark}{Remark}

\newcommand{\R}{\mathbb{R}}
\newcommand{\Q}{\mathbb{Q}}
\newcommand{\Z}{\mathbb{Z}}

\newcommand{\D}{\mathcal{D}}
\renewcommand{\O}{\mathcal{O}}
\renewcommand{\P}{\mathcal{P}}
\newcommand{\lexle}{<_{\text{LEX}}}

\newcommand{\lexgeq}{\geq_{\text{LEX}}}

\DeclareMathOperator{\Nm}{N}
\DeclareMathOperator{\ord}{ord}

\makeatletter
\newcommand{\addresseshere}{
  \enddoc@text\let\enddoc@text\relax
}
\makeatother

\title{Partitions in real quadratic fields}
\author{David Stern}
\author{Mikul\'{a}\v{s} Zindulka}
\address{Charles University, Faculty of Mathematics and Physics, Department of Algebra,
Sokolovsk\'{a} 83, 186 75 Praha 8, Czech Republic}
\email{david.stern@matfyz.cuni.cz}
\email{mikulas.zindulka@matfyz.cuni.cz}

\subjclass[2020]{11P81, 11R11, 11R80}
\keywords{Partitions, quadratic fields, totally positive integers}
\thanks{We acknowledge support by Czech Science Foundation (GA\v{C}R) grant 21-00420M and the Charles University Research Centre program UNCE/SCI/022.}

\begin{document}

\begin{abstract}
We study partitions of totally positive integers in real quadratic fields. We develop an algorithm for computing the number of partitions, prove a result about the parity of the partition function, and characterize the quadratic fields such that there exists an element with exactly $1$--$5$, $7$, and $11$ partitions.
\end{abstract}

\maketitle

\section{Introduction}
The theory of integer partitions has been receiving constant attention since the times of Euler. In this paper we consider its analogy in a real quadratic field $ K $, where positive integers are replaced by totally positive integral elements. Virtually nothing is known about the properties of the associated partition function $ p_K(\alpha) $. Our aim is to open this new line of investigation.

The integer partition function $ p(n) $ is defined for $ n \in \Z_{\geq 1} $ as the number of integer partitions of $ n $. The famous asymptotic formula of Hardy and Ramanujan~\cite{HR} (subsequently improved by Rademacher~\cite{Rad1}) states that
\[
	p(n) \sim \frac{1}{4n\sqrt{3}}\exp(\pi\sqrt{\frac{2n}{3}}).
\]
(We use $ f(n) \sim g(n) $ to denote the fact that $ f(n)/g(n) \to 1 $ as $ n \to \infty $.)

Of course, there are many other properties of the partition function one might wish to investigate besides the asymptotics. Kolberg~\cite{Kol} was the first one to note that $ p(n) $ is odd as well as even infinitely often. His proof by contradiction is based on a recurrence for $ p(n) $ which is an immediate corollary of Euler's pentagonal number theorem. Parkin and Shanks~\cite{PS} conjectured that the natural density of $ n $ for which $ p(n) $ is odd equals $ 1/2 $.

A rich area of interest are congruence properties of $ p(n) $. Instead of trying to trace all the developments since the discovery of Ramanujan's congruences~\cite{Ram1, Ram2}, we refer to~\cite{BO,Ono}.

The notion of partition can be extended to the number field setting. Let $ K $ be a totally real number field with a ring of integers $ \O_K $ and let $ \O_K^+ $ denote the set of totally positive integers in $ K $. A \emph{partition} of an element $ \alpha \in \O_K^+ $ is an expression of the form
\[
	\alpha = \alpha_1+\alpha_2+\dots+\alpha_n,\qquad \alpha_i \in \O_K^+.
\]
The order of the summands is irrelevant. Let $ p_K(\alpha) $ be the number of partitions of $ \alpha $ and set $ p_K(0) = 1 $. We call $ p_K $ the \emph{partition function} associated with the number field $ K $.

The problem to estimate the growth of $ p_K $ was proposed by Rademacher~\cite{Rad2}. The solution was found by Meinardus, first in the case when $ K $ is real quadratic~\cite{Mei1} and then for an arbitrary totally real number field~\cite{Mei2}. If $ \Nm(\alpha) $ denotes the norm of $ \alpha $ in $ K $, then
\begin{equation}
\label{eqMei}
	\log p_K(\alpha) = (n+1)\cdot\sqrt[n+1]{\frac{\zeta(n+1)}{\sqrt{\Delta_K}}\Nm(\alpha)}\left(1+o(1)\right),\qquad \Nm(\alpha)\to\infty.
\end{equation}
Here $ \log $ denotes the natural logarithm, $ \zeta $ is the Riemann zeta function, $ n = [K:\Q] $ is the degree, and $ \Delta_K $ is the discriminant of $ K $. The result was further extended to an arbitrary number field (not necessarily totally real) by Mitsui~\cite{Mit}.

An element $ \alpha \in \O_K^+ $ is called \emph{indecomposable} if it cannot be expressed as $ \alpha = \beta+\gamma $ where $ \beta, \gamma \in \O_K^+ $. Thus indecomposable elements are precisely those $ \alpha \in \O_K^+ $ such that $ p_K(\alpha)=1 $. Dress and Scharlau~\cite{DS} characterized these elements in a real quadratic field $ K = \Q(\sqrt{D}) $ in terms of the continued fraction of $ \sqrt{D} $. Applications for indecomposables were found in the theory of universal quadratic forms over number fields~\cite{BK1, BK2, Kal, Kim, Yat}. Motivated by this, Hejda and Kala~\cite{HK} examined the additive structure of $ \O_K^+ $ and showed that it uniquely determines the underlying quadratic field.

One can also consider other types of partitions where the parts are elements of a real quadratic field $ K $ or an algebraic number field of a higher degree. One possibility is to fix $ \beta \in K $ and look at partitions of the form
\[
	\alpha = a_j\beta^j+a_{j-1}\beta^{j-1}+\dots+a_1\beta+a_0
\]
where $ j \in \Z_{\geq 0} $ and $ a_i \in \Z_{\geq 0} $ for $ i = 0, 1, \dots, j $. In the case when $ \beta = m \geq 2 $ is an integer, these are the so-called \emph{$ m $-ary partitions}~\cite{AFS, GMU, Mah, Zmi}. The generalization to an algebraic basis $ \beta $ was investigated by Kala and the second author~\cite{KZ}.

\

As far as we know, no one has studied the properties of the function $ p_K(\alpha) $ beyond the asymptotics. First of all, it is useful to be able to efficiently compute its values. To this end, we prove that $ p_K(\alpha) $ satisfies a recurrence similar to a well known formula for the integer partition function $ p(n) $.

\begin{theorem}
\label{thmRec}
Let $ K $ be a totally real number field. If $ \alpha \in \O_K^+ $, then
\begin{equation}
\label{eqRec}
	\alpha p_K(\alpha) = \sum_{0 \prec \beta \preceq \alpha}\sigma_K(\beta)p_K(\alpha-\beta),
\end{equation}
where the sum is taken over all $ \beta \in \O_K^+ $ such that $ \beta \preceq \alpha $ and $ \sigma_K $ is given by~\eqref{eqSigma}.
\end{theorem}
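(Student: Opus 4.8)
My plan is to prove \eqref{eqRec} by a weighted double count of partitions, mirroring the standard combinatorial proof of the classical identity $n\,p(n)=\sum_{k\ge 1}\sigma(k)p(n-k)$. First I would fix $\alpha\in\O_K^+$ and record the elementary finiteness fact that $\{\gamma\in\O_K^+ : \gamma\preceq\alpha\}$ is finite: under the embedding $\O_K\hookrightarrow\R^{n}$ with $n=[K:\Q]$, such $\gamma$ are exactly the points of the full-rank lattice $\O_K$ lying in the bounded box $\prod_i(0,\sigma_i(\alpha)]$. Consequently every sum that appears below has finite support, and all rearrangements are legitimate. I would also pin down, from \eqref{eqSigma}, that $\sigma_K(\gamma)$ is the sum of those $\delta\in\O_K^+$ with $\gamma/\delta\in\Z_{\ge 1}$; equivalently $\sigma_K(\gamma)=\sum_{j\ge 1,\ \gamma/j\in\O_K}\gamma/j$, where $\gamma/j$ is then automatically totally positive since $\sigma_i(\gamma/j)=\sigma_i(\gamma)/j>0$.

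For the main computation, encode a partition $P$ of $\alpha$ by its multiplicity function $\beta\mapsto m_\beta(P)\in\Z_{\ge 0}$, so that $\alpha=\sum_{\beta\in\O_K^+} m_\beta(P)\,\beta$ holds as an identity in the $\Z$-module $\O_K$. Summing this identity over the $p_K(\alpha)$ partitions of $\alpha$ gives
\[
  \alpha\, p_K(\alpha)=\sum_{P\vdash\alpha}\ \sum_{\beta\in\O_K^+} m_\beta(P)\,\beta
  =\sum_{\beta\in\O_K^+}\beta\sum_{j\ge 1}\#\{P\vdash\alpha : m_\beta(P)\ge j\}.
\]
The key bijection is that deleting $j$ copies of $\beta$ from $P$ identifies $\{P\vdash\alpha : m_\beta(P)\ge j\}$ with the set of partitions of $\alpha-j\beta$ (the source is empty unless $\alpha-j\beta\in\O_K^+\cup\{0\}$), so the inner count equals $p_K(\alpha-j\beta)$. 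Hence $\alpha\,p_K(\alpha)=\sum_{\beta\in\O_K^+}\sum_{j\ge 1}\beta\,p_K(\alpha-j\beta)$, which is again a finite sum.

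Finally I would reindex by $\gamma=j\beta$: a pair $(\beta,j)$ with $j\ge 1$ and $\beta\in\O_K^+$ contributes to the coefficient of $p_K(\alpha-\gamma)$ precisely when $j\beta=\gamma$, and summing $\beta=\gamma/j$ over all such factorizations of $\gamma$ produces exactly $\sigma_K(\gamma)$. Since the terms vanish unless $0\prec\gamma\preceq\alpha$, this yields \eqref{eqRec}. I expect no serious obstacle here; the only points needing care are the finiteness/support claim that legitimizes interchanging the order of summation, and verifying that the reindexed inner sum literally matches the definition of $\sigma_K$ in \eqref{eqSigma}. (Alternatively, the same identity falls out of logarithmic ``differentiation'' of the product formula $\sum_{\alpha}p_K(\alpha)X^\alpha=\prod_{\beta\in\O_K^+}(1-X^\beta)^{-1}$ in a suitable completion of the semigroup ring of $\O_K^+\cup\{0\}$, using the $\O_K$-valued derivation $D(X^\alpha)=\alpha X^\alpha$; there the work shifts to justifying convergence of the infinite product and the logarithmic-derivative identity, which again reduces to the same finiteness fact.)
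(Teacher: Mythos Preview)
Your proof is correct and complete; the finiteness argument, the bijection deleting $j$ copies of a part, and the reindexing by $\gamma=j\beta$ all go through exactly as you describe, and the inner sum does match \eqref{eqSigma} verbatim.

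Your route is genuinely different from the paper's. The paper takes precisely the alternative you sketch in your parenthetical remark: it sets up the formal generating function $F_K(x)=\sum_{\alpha\succeq 0}p_K(\alpha)x^\alpha=\prod_{\beta\succ 0}(1-x^\beta)^{-1}$, applies the logarithmic derivative $x\,F_K'/F_K$, expands $-\log(1-x^\beta)$ as a power series, and compares coefficients. Since this series converges only at $x=0$, the paper treats it as a purely formal manipulation. Your bijective double-count avoids that formalism entirely: every sum is manifestly finite and every step is an honest identity in the $\Z$-module $\O_K$, so there is nothing to justify about convergence or the meaning of a derivation on a completed semigroup ring. The trade-off is that the generating-function proof is a one-line computation once the machinery is in place and makes the analogy with the classical $np(n)=\sum_k\sigma(k)p(n-k)$ transparent at the level of identities, whereas your argument is longer but more elementary and self-contained.
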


We find two applications for this theorem in real quadratic fields. First, we use it to develop an efficient algorithm for the computation of the values of $ p_K $ (Algorithm~\ref{algP}). The second application concerns the parity of the partition function.

\begin{theorem}
\label{thmParity}
Let $ K = \Q(\sqrt{D}) $ where $ D \in \Z_{\geq 2} $ is squarefree. If $ D \equiv 2, 3 \pmod{4} $, then there exist infinitely many $ n \in \Z_{\geq 1} $ such that $ p_K(n) $ is odd and infinitely many $ n \in \Z_{\geq 1} $ such that $ p_K(n) $ is even.
\end{theorem}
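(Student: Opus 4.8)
The plan is to adapt Kolberg's argument for the ordinary partition function, with Theorem~\ref{thmRec} playing the role of the recurrence Kolberg derives from Euler's pentagonal number theorem. The first and most delicate step is to extract from~\eqref{eqRec} a congruence relating only the values $p_K(m)$ with $m\in\Z_{\geq 0}$. Fix $n\in\Z_{\geq 1}$ and apply~\eqref{eqRec} with $\alpha=n$. The index set $\{\beta\in\O_K^+ : \beta\preceq n\}$ is preserved by the Galois conjugation $\beta\mapsto\bar\beta$ of $K$, and its fixed points are exactly the rational integers $1,2,\dots,n$. Galois conjugation permutes the partitions of any element, so $p_K$ is Galois-invariant and $p_K(n-\beta)=p_K(n-\bar\beta)$; likewise $\sigma_K(\bar\beta)=\overline{\sigma_K(\beta)}$. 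Hence the two summands indexed by a conjugate pair $\{\beta,\bar\beta\}$ with $\beta\neq\bar\beta$ add up to $\big(\sigma_K(\beta)+\overline{\sigma_K(\beta)}\big)\,p_K(n-\beta)$. This is exactly where the hypothesis $D\equiv 2,3\pmod 4$ enters: then $\O_K=\Z[\sqrt D]$, every element has even trace, and so every such paired contribution is $\equiv 0\pmod 2$. Only the rational indices survive, giving
\begin{equation*}
	n\,p_K(n)\equiv\sum_{b=1}^{n}\sigma_K(b)\,p_K(n-b)\pmod 2 ,
\end{equation*}
and a direct computation from~\eqref{eqSigma} identifies $\sigma_K(b)$ for $b\in\Z_{\geq 1}$ with the ordinary sum-of-divisors function $\sigma(b)$.

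From here I run two contradiction arguments in Kolberg's style. Suppose first that $p_K(n)$ is even for all $n\geq N$. For every $n\geq 2N$ the terms with $n-b\geq N$ vanish modulo $2$, and since $p_K(n)\equiv 0$ we are left with $\sum_{j=0}^{N-1}\sigma(n-j)\,p_K(j)\equiv 0\pmod 2$. As $p_K(0)=1$, this exhibits a nontrivial linear recurrence with constant coefficients over $\mathbb{F}_2$ satisfied by $\big(\sigma(n)\bmod 2\big)_n$ for all large $n$, so that sequence is eventually periodic. But $\sigma(n)$ is odd precisely when $n$ is a square or twice a square, and that set has arbitrarily long gaps, so the sequence is not eventually periodic --- a contradiction. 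Suppose instead that $p_K(n)$ is odd for all $n\geq N$. Splitting the sum at $b=n-N$ and using $p_K(n-b)\equiv 1$ for $n-b\geq N$ gives, for all $n\geq 2N$,
\begin{equation*}
	n\equiv\sum_{b=1}^{n-N}\sigma(b)+\sum_{j=0}^{N-1}\sigma(n-j)\,p_K(j)\pmod 2 .
\end{equation*}
Now choose $n\geq 2N$ so that none of $n-N,n-N+1,\dots,n+1$ is a square or twice a square; such $n$ exist because the gaps of that set are unbounded. Then the second sum, together with $\sigma(n+1-N)$, is $\equiv 0\pmod 2$, so the right-hand side takes the same value at $n$ and at $n+1$, forcing $n\equiv n+1\pmod 2$ --- a contradiction. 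The first case yields infinitely many odd values of $p_K(n)$ and the second infinitely many even values, which is the assertion of the theorem.

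I expect the main obstacle to be the reduction in the first paragraph: checking that $\sigma_K$ commutes with the Galois action, evaluating $\sigma_K(b)$ from the definition, and --- crucially --- verifying that it is exactly the congruence $D\equiv 2,3\pmod 4$ that kills the cross terms (for $D\equiv 1\pmod 4$ the ring $\Z[\tfrac{1+\sqrt D}{2}]$ has elements of odd trace and the pairing argument collapses). The number-theoretic facts used at the end --- the parity of $\sigma(n)$ and the unbounded gaps between squares and twice-squares --- are classical and need only a brief justification. It is worth emphasizing that the argument never divides by $n$: it uses only $n\,p_K(n)\equiv 0$ or $n\,p_K(n)\equiv n\pmod 2$, so the parity of $n$ is irrelevant throughout.
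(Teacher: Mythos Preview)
Your proof is correct, and it reaches the same key congruence
\[
	n\,p_K(n)\equiv\sum_{b=1}^{n}\sigma(b)\,p_K(n-b)\pmod 2
\]
that the paper obtains, but by a genuinely different and more direct route. The paper introduces the auxiliary function $P_K(n)=\sum_{\Tr\alpha=2n}p_K(\alpha)$, shows $P_K(n)\equiv p_K(n)\pmod 2$ by conjugate-pairing, invokes the prefab formalism to get a product formula $\sum P_K(n)x^n=\prod(1-x^j)^{-a_j}$, and then uses the observation that each $a_j$ is odd to reduce the resulting recurrence to one involving $\sigma$. You instead specialize Theorem~\ref{thmRec} at $\alpha=n$ and pair the non-rational summands $\beta,\bar\beta$ directly; the point that the hypothesis $D\equiv 2,3\pmod 4$ forces $\Tr(\sigma_K(\beta))\in 2\Z$ is exactly the analogue of the paper's ``$a_d$ is odd'' step, and your identification $\sigma_K(b)=\sigma(b)$ for $b\in\Z$ is immediate from~\eqref{eqSigma}. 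Your approach has the advantage of bypassing the prefab machinery and the auxiliary $P_K$ entirely. For the contradiction step the paper makes an explicit choice $n=2m^2+1$ to isolate the term $P_K(1)=1$, whereas you argue that the ``eventually even'' hypothesis would force $(\sigma(n)\bmod 2)$ to satisfy a fixed $\mathbb{F}_2$-linear recurrence and hence be eventually periodic, which is incompatible with the unbounded gaps in $\{m^2\}\cup\{2m^2\}$; this is a slightly more abstract but equally valid route, and your treatment of the ``eventually odd'' case (comparing $n$ and $n+1$ inside a long gap) is a clean variant that the paper only alludes to with ``similar''.
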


The method of proof of this theorem fails when $ D \equiv 1 \pmod{4} $. We do not know if the statement holds also in this case.

In the rest of the paper, we focus on the following problem: Let $ m \in \Z_{\geq 1} $ be given. Characterize all real quadratic fields $ K $ such that there exists an element $ \alpha\in\O_K^+ $ with exactly $ m $ partitions. To simplify the following discussion, we let
\[
	\D(m) = \left\{D \in \Z_{\geq 2}\text{ squarefree}|\; m \notin p_K(\O_K^+)\right\}.
\]
That is, $\D(m)$ is a set of $D$'s such that $ m $ does not belong to the range of $ p_K $ in $ K = \Q(\sqrt{D}) $.

We split the problem into two cases. In the first case, $ m $ belongs to the range of $ p(n) $, i.e., there exists $ n \in \Z_{\geq 1} $ with exactly $ m $ integer partitions. The first few values  of $ p(n) $ are $1$, $2$, $3$, $5$, $7$, and $11$. For these values, we can compute $ \D(m) $ explicitly.

\begin{theorem}
\label{thmD}
Let $ K = \Q(\sqrt{D}) $ where $ D \in \Z_{\geq 2} $ is squarefree. We have
\[
\begin{gathered}
	\D(1) = \emptyset,\quad\D(2) = \emptyset,\quad\D(3) = \{5\},\quad\D(5) = \{2, 3, 5\},\\
	\D(7) = \{2, 5\},\quad\D(11) = \{2, 3, 5, 6, 7, 13, 21\}.
\end{gathered}
\]
\end{theorem}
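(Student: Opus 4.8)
The plan is to establish Theorem~\ref{thmD} by two complementary techniques: exhibiting explicit elements $\alpha\in\O_K^+$ with $p_K(\alpha)=m$ to show $D\notin\D(m)$, and proving structural obstructions to rule out the finitely many exceptional $D$. For the membership direction, the key observation is that a rational integer $n$ always sits inside $\O_K^+$, so $p_K(n)\geq p(n)$; more usefully, for small $n$ one can hope $p_K(n)$ is exactly controlled by the indecomposable elements of small norm. For instance, when $D\equiv 2,3\pmod 4$ the element $\sqrt{D}$ is never totally positive, and one can often arrange that the only parts $\beta\preceq n$ with small norm are rational integers together with a short list of conjugate pairs $a\pm b\sqrt{D}$; enumerating partitions then reduces to a finite bookkeeping problem. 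I would use the Dress--Scharlau description of indecomposables (via the continued fraction of $\sqrt{D}$) to pin down exactly which totally positive $\beta$ can appear as parts of a given small $\alpha$, and then simply list the partitions. In practice one computes $p_K(\alpha)$ for a handful of carefully chosen $\alpha$ (small rational integers, small elements like $2+\sqrt{D}$ when $D=2,3$, etc.) using Algorithm~\ref{algP} or Theorem~\ref{thmRec}, and reads off which values $m\in\{1,2,3,5,7,11\}$ occur.

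For the nonmembership direction — showing $D\in\D(m)$ — the heart of the matter is a monotonicity-type argument. The idea is that the range $p_K(\O_K^+)$, while not an interval, is forced to contain many values because adding a fixed indecomposable part to $\alpha$ cannot decrease $p_K$ too slowly, and splitting a part increases it in controlled steps. Concretely, if $\alpha\preceq\gamma$ then every partition of $\alpha$ extends (by appending $\gamma-\alpha$, or by refining) to give lower bounds relating $p_K(\gamma)$ to $p_K(\alpha)$; combined with explicit small-norm computations this squeezes out the claim that, say, no element of $\Q(\sqrt 5)$ has exactly $3$ partitions. I expect the proof to proceed field by field for the exceptional $D$: for each $D$ in the claimed $\D(m)$, enumerate all $\alpha\in\O_K^+$ with $p_K(\alpha)\leq m$ (a finite set, by the Meinardus asymptotics~\eqref{eqMei} together with the recurrence, which guarantees $p_K(\alpha)\to\infty$) and verify $m$ is skipped. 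The finiteness of $\{\alpha : p_K(\alpha)\leq m\}$ is what makes this a genuine finite check rather than an infinite one.

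The main obstacle, I expect, is the nonmembership half for the larger values — especially $\D(11)=\{2,3,5,6,7,13,21\}$, where one must both verify that $11$ is skipped for each of those seven fields and, crucially, verify that $11$ is \emph{attained} for every other squarefree $D\geq 2$. The latter is an infinitude of fields, so a uniform construction is needed: one must produce, for all sufficiently large $D$ (and separately handle small $D$ by computation), an explicit $\alpha_D\in\O_K^+$ with $p_K(\alpha_D)=11$. The natural candidate is a small rational integer or a small element whose only admissible parts are a predictable short list independent of $D$ once $D$ is large (since indecomposables of bounded norm become sparse and their partition-theoretic contribution stabilizes); proving that this list really does yield exactly $11$ partitions, uniformly, is the delicate point. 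A secondary obstacle is bounding the search space $\{\alpha : p_K(\alpha)\leq 11\}$ effectively enough that the case analysis for the seven exceptional fields is genuinely finite and checkable — this requires an explicit, not merely asymptotic, lower bound on $p_K(\alpha)$ in terms of $\Nm(\alpha)$ or the coordinates of $\alpha$, which one can extract from Theorem~\ref{thmRec} by induction.
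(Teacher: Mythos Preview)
Your overall strategy matches the paper's: handle all but finitely many $D$ by a uniform construction (taking $\alpha$ to be a small rational integer and showing its only totally positive parts are rational), then dispose of the remaining $D$ by a finite search. The paper makes the uniform step precise as Theorem~\ref{thmDn}, giving explicit optimal bounds $E_n,F_n$ such that $p_K(n)=p(n)$ whenever $D>E_n$ (resp.\ $D>F_n$); Corollary~\ref{corP} then reduces each of $m\in\{3,5,7,11\}$ to a short explicit list of fields.

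There is, however, a genuine gap in your nonmembership half. You assert that $\{\alpha\in\O_K^+:p_K(\alpha)\le m\}$ is finite, appealing to the Meinardus asymptotic. This is false: every totally positive unit has $p_K=1$, and there are infinitely many of them. More generally, $p_K$ is invariant under conjugation and under multiplication by totally positive units, so any level set is a union of infinite orbits. What \emph{is} true---and what the paper uses---is that the level set is finite modulo these symmetries. The paper passes to the fundamental domain $1\le\alpha/\alpha'\le\varepsilon_+$, $y\ge0$, writes $\alpha=(\lceil y\xi_D\rceil+k)+y\omega_D$, and then bounds both $k$ and $y$ by the elementary comparison results of Lemma~\ref{lemmaK} (not by Meinardus, which would be hard to make effective). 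Specifically, $k$ is bounded because $p_K(k_{\max})\ge m$ forces $k\le k_{\max}$ via Lemma~\ref{lemmaK}~(iv), and $y$ is bounded by combining the unit constraint of Lemma~\ref{lemmaK}~(ii) with Lemma~\ref{lemmaK}~(i),(iii). Without this reduction to a fundamental domain your ``finite check'' is not finite, and the Meinardus route you propose for an effective bound is both unnecessary and substantially harder than the elementary argument the paper gives.
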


When $ m $ is not in the range of $ p(n) $, the situation is trickier. In this direction, we obtained results for $ m = 4 $ and $ 6 $. The quantity $ \xi_D $ which appears in the following theorem is defined in Section~\ref{secPrelim}.

\begin{theorem}
\label{thm46}
Let $ K = \Q(\sqrt{D}) $ where $ D \in \Z_{\geq 2} $ is squarefree.
\begin{enumerate}[i)]
\item There exists $ \alpha \in \O_K^+ $ such that $ p_K(\alpha) = 4 $.
\item If $ \lceil \xi_D \rceil-\xi_D > \frac{1}{2} $, then there exists $ \alpha \in \O_K^+ $ such that $ p_K(\alpha) = 6 $.
\end{enumerate}
\end{theorem}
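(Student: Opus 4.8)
The plan is to exhibit, in each part, one explicit element $\alpha\in\O_K^+$ and to compute $p_K(\alpha)$ by hand. Write $\rho:=\lceil\xi_D\rceil-\xi_D$; by the choice of $\xi_D$ in Section~\ref{secPrelim} this is an element of $\O_K^+$ with $0<\rho<1$ and $\rho\notin\Q$, and $\O_K=\Z+\Z\xi_D$. Attach to each $\beta\in\O_K$ its \emph{level} $\ell(\beta)\in\Z$, the coefficient of $\xi_D$ in $\beta$; levels are additive, so in any partition $\alpha=\alpha_1+\dots+\alpha_n$ we have $\sum_i\ell(\alpha_i)=\ell(\alpha)$ and every part satisfies $\alpha_i\preceq\alpha$. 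Hence the problem reduces to (a) determining the finite set $S(\alpha):=\{\beta\in\O_K^+:\beta\preceq\alpha\}$ and (b) a bookkeeping count of the multisets drawn from $S(\alpha)$ that sum to $\alpha$.

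For i) I would take $\alpha:=2+\rho$, which is totally positive with $\ell(\alpha)=-1$. The key claim is $S(\alpha)=\{1,2,\rho,1+\rho,\alpha\}$. To see this, let $\beta=a+b\xi_D\in S(\alpha)$, so $\beta$ and its conjugate $\beta'$ are positive, $\beta<\alpha<3$, and $\beta'<\alpha'$. If $b\ge 1$, then $\beta$ is the larger of $\beta,\beta'$, and $\beta'>0$ forces $\beta\ge b(\xi_D-\xi_D')\ge\sqrt D$; together with $\beta<3$ this leaves only $D=2$ (for $\xi_D=\sqrt D$) or $D=5$ (for $\xi_D=\tfrac{1+\sqrt D}{2}$), each excluded by a direct inequality. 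If $b\le-2$, then $\beta'=\beta+|b|(\xi_D-\xi_D')>|b|(\xi_D-\xi_D')$ already exceeds $\alpha'$. So $b\in\{0,-1\}$: the case $b=0$ gives $\beta\in\{1,2\}$, and $b=-1$ with $\xi_D<a\le 2+\lceil\xi_D\rceil$ gives $\beta\in\{\rho,1+\rho,\alpha\}$. Now I count: $S(\alpha)$ has no element of positive level and exactly three of level $-1$ (namely $\rho,1+\rho,\alpha$), so a partition of $\alpha$ uses exactly one part of level $-1$ and otherwise the rational parts $1,2$. Taking that part to be $\alpha$ gives the one-part partition; $1+\rho$ leaves a copy of $1$; $\rho$ leaves $2$, which has two partitions. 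Hence $p_K(\alpha)=1+1+2=4$.

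For ii), assume $\rho>\tfrac12$ and take $\alpha:=1+2\rho$, totally positive with $\ell(\alpha)=-2$. The hypothesis is equivalent to $\lceil 2\xi_D\rceil=2\lceil\xi_D\rceil-1$, so $2\rho-1=\lceil 2\xi_D\rceil-2\xi_D$ is a totally positive element of $\O_K$ with $0<2\rho-1<1$; this extra element is precisely what makes the count come out to $6$. The same conjugate-size analysis as above (now $b$ is confined to $\{0,-1,-2\}$, and $b=-2$ contributes exactly $2\rho-1,\,2\rho,\,\alpha$ because $\rho>\tfrac12$ — had $\rho<\tfrac12$ one would obtain only $2\rho,\alpha$ here) yields $S(\alpha)=\{1,2,\rho,1+\rho,2\rho-1,2\rho,\alpha\}$. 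Since $S(\alpha)$ has no positive-level element, two of level $-1$ and three of level $-2$, and $\ell(\alpha)=-2$, a partition of $\alpha$ uses either one part of level $-2$ or two parts of level $-1$, together with rational parts $1,2$. The first case contributes $\{\alpha\}$, then $2\rho$ together with a copy of $1$, then $2\rho-1$ together with a partition of $2$: total $1+1+2=4$. The second contributes $\{\rho,\rho\}$ plus a copy of $1$, and $\{\rho,1+\rho\}$, whereas $\{1+\rho,1+\rho\}$ already exceeds $\alpha$: total $1+1=2$. Hence $p_K(\alpha)=6$.

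The conceptual content is short; the real work is the determination of $S(\alpha)$. I expect the main obstacle there to be carrying the argument out uniformly for the two shapes of $\xi_D$ and checking the few genuinely tight sub-cases that occur only for the smallest $D$ (notably $D=2$, and $D=5$ when $\xi_D=\tfrac{1+\sqrt D}{2}$), where the generic inequalities bounding $\beta'$ only barely suffice and must be verified numerically. Once $S(\alpha)$ is pinned down, the level bookkeeping and the final enumeration are entirely mechanical.
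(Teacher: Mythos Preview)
Your proposal is correct and is essentially the paper's own argument: the elements $2+\rho$ and $1+2\rho$ you pick are precisely the conjugates of the paper's choices $(\lceil\xi_D\rceil+2)+\omega_D$ and $(\lceil 2\xi_D\rceil+2)+2\omega_D$ from Theorems~\ref{thmM4} and~\ref{thmM6}, and your additive ``level'' bookkeeping in the $\xi_D$-basis is the conjugate form of the paper's bound via Lemma~\ref{lemmaK}(iv). (One notational slip: for $D\equiv 1\pmod 4$ the paper defines $\xi_D=\tfrac{\sqrt D-1}{2}$, not $\tfrac{1+\sqrt D}{2}$; this does not affect your argument.)
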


Moreover, an element $ \alpha $ with exactly $ 4 $ partitions is explicitly constructed in Theorem~\ref{thmM4}. The condition for $ 6 $ to be contained in the range of $ p_K $ is sufficient but not necessary. The first counterexample is $ K = \Q(\sqrt{3}) $ (but see also Example~\ref{example6}). Thus if $ D = 3 $, then $ \lceil\xi_D\rceil-\xi_D = \lceil\sqrt{3}\rceil-\sqrt{3} \leq \frac{1}{2} $ but $ \alpha = 4 $ has $ 6 $ partitions:
\[
	4 = 3+1 = 2+2 = 2+1+1 = 1+1+1+1 = \left(2+\sqrt{3}\right)+\left(2-\sqrt{3}\right).
\]

The results of our numerical computations are included in the Appendix. Tables~\ref{tabD2}--\ref{tabD21} contain the number of partitions of particular elements in various quadratic fields. Tables~\ref{tabD23mod4} and~\ref{tabD1mod4} contain all elements (up to conjugation and multiplication by units) with a given number of partitions.

\section*{Acknowledgements}
This article is partly based on a bachelor thesis of the first author~\cite{Ste}. We wish to thank our supervisor V\'{i}t\v{e}zslav Kala for his guidance and support and Daniel Gil-Mu\~{n}oz for helpful comments.

\section{Preliminaries}
\label{secPrelim}
Let $ K $ be a totally real number field of degree $ d = [K:\Q] $, so that there exist $ d $ real embeddings
\[
	\sigma_1, \sigma_2, \ldots, \sigma_d: K \hookrightarrow \R.
\]
An element $ \alpha \in K $ is called \emph{totally positive} if
\[
	\sigma_i(\alpha)>0, \qquad i = 1, \ldots, d.
\]
If $ \alpha, \beta \in K $, we write $ \alpha \succ \beta $ to denote the fact that $ \alpha-\beta $ is totally positive. In particular, $ \alpha \succ 0 $ means that $ \alpha $ is totally positive. We use $ \alpha \succeq \beta $ if $ \alpha \succ \beta $ or $ \alpha = \beta $.

We mostly focus on the case when $ K = \Q(\sqrt{D}) $ is a real quadratic field. We always assume that $ D \geq 2 $ is a squarefree integer. In a real quadratic field, $ \alpha $ is totally positive if and only if $ \alpha > 0 $ and $ \alpha' > 0 $, that is, if and only if both the element and its conjugate are positive.

Now we describe the characterization of indecomposables originally proved in~\cite{DS}. These facts are fairly standard and their summary can also be found in~\cite{HK}. Throughout the paper, we use the following notation. The ring of integers $ \O_K $ has a basis $ (1, \omega_D) $ where
\[
	\omega_D := \begin{cases}
		\sqrt{D},&D \equiv 2, 3 \pmod{4},\\
		\frac{1+\sqrt{D}}{2},&D \equiv 1 \pmod{4}.
	\end{cases}
\]
We define
\[
	\xi_D := -\omega_D' = \begin{cases}
		\sqrt{D},&D \equiv 2, 3 \pmod{4},\\
		\frac{\sqrt{D}-1}{2},&D \equiv 1 \pmod{4}
	\end{cases}
\]
and
\[
	\sigma_D := \omega_D+\lfloor\xi_D\rfloor = \begin{cases}
		\sqrt{D}+\lfloor\sqrt{D}\rfloor,& D \equiv 2, 3 \pmod{4},\\
		\frac{1+\sqrt{D}}{2}+\lfloor\frac{\sqrt{D}-1}{2}\rfloor,& D \equiv 1 \pmod{4}.
	\end{cases}
\]

The element $ \sigma_D $ has a purely periodic continued fraction expansion
\[
	\sigma_D = [\overline{u_0, u_1, \ldots, u_{s-1}}].
\]
Let $ p_{-1}=1 $, $ q_{-1}=0 $ and $ p_0 = \lceil u_0/2 \rceil $, $ q_0 = 1 $. Let $ p_i $ and $ q_i $ be defined recursively by
\begin{alignat*}{2}
	p_{i+2}& := u_{i+2}p_{i+1}+p_i,\quad & i\geq -1,\\
	q_{i+2}& := u_{i+2}q_{i+1}+q_i,\quad & i\geq -1.
\end{alignat*}
Then $ p_i/q_i = [\lceil u_0/2, u_1, \dots, u_i] $ are the convergents to $ \omega_D $. Next, we define
\begin{alignat*}{2}
	\alpha_i& := p_i+q_i\xi_D,\quad & i \geq -1,\\
	\alpha_{i, r}& := \alpha_i+r\alpha_{i+1},\quad & i \geq -1.
\end{alignat*}

The indecomposable elements in $ \O_K^+ $ are characterized as follows: they are the elements $ \alpha_{i, r} $ with odd $ i \geq -1 $ and $ 0 \leq r \leq u_{i+2}-1 $ together with their conjugates.

If we let $ \varepsilon>1 $ denote the fundamental unit of $ \O_K $, then $ \varepsilon = \alpha_{s-1} $. The smallest totally positive unit $ \varepsilon_+>1 $ in $ \O_K^+ $ satisfies $ \varepsilon_+ = \varepsilon = \alpha_{s-1} $ if $ s $ is even and $ \varepsilon_+ = \varepsilon^2 = \alpha_{2s-1} $ if $ s $ is odd. In general, the $ \alpha_i $'s satisfy $ \alpha_{i+s} = \varepsilon \alpha_i $, hence also $ \alpha_{i+s, r} = \varepsilon \alpha_{i, r} $. Thus there exist only finitely many indecomposables up to multiplication by units.

\section{Recurrence for the partition function}
\label{secRec}

In order to prove the estimate~\eqref{eqMei} for $ p_K(\alpha) $ in real quadratic fields, one uses the generating function in two variables
\[
	1+\sum_{\alpha\succ 0}p_K(\alpha)x^\alpha y^{\alpha'} = \prod_{\beta\succ 0} \frac{1}{1-x^\beta y^{\beta'}},\qquad |x|<1, |y|<1.
\]
The sum and the product are taken over elements in $ \O_K^+ $. For a proof of the equality of the two expressions in the stated range, see~\cite[Hilfssatz 1]{Mei1}.
For our purposes, it is more convenient to use a different generating function. By an exact analogy with the integer partitions, we set
\[
	F_K(x) = \sum_{\alpha\succeq 0}p_K(\alpha)x^\alpha = \prod_{\beta\succ 0}\frac{1}{1-x^\beta},
\]
where the sum and the product are extended over all $ \alpha \in \O_K^+ $. However, it should be noted straight away that the sum and the product converge only at the point $ x = 0 $. Indeed, there exists a sequence $ (\alpha_j)_{j=1}^\infty \subset \O_K^+ $ such that $ \alpha_j \to 0 $. Thus the series and the product in the definition of $ F_K(x) $ can be treated only as formal expressions.

The integer partition function $ p(n) $ satisfies the recurrence
\[
	np(n) = \sum_{k=1}^n\sigma(k)p(n-k),\qquad n \geq 1
\]
where $ \sigma(k) $ is the sum of divisors of $ k $ (see for example~\cite[p.~9]{Wil}). To generalize this formula to totally real number fields, we first define an analogue of the divisor function $ \sigma $.

Let $ K $ be a number field. If $ \beta \in \O_K $, let
\begin{equation}
\label{eqSigma}
	\sigma_K(\beta) := \sum_{n \in \Z_{\geq 1}, n \mid \beta}\frac{\beta}{n}.
\end{equation}

If we let $ c(\beta) $ denote the largest $ n \in \Z_{\geq 1} $ such that $ n \mid \beta $, then
\[
	\sigma_K(\beta) = \beta \sum_{n \in \Z_{\geq 1}, n \mid c(\beta)}\frac{1}{n} = \frac{\beta}{c(\beta)}\sigma(c(\beta)).
\]

\begin{proof}[Proof of Theorem~\ref{thmRec}]
The recurrence is proved by the standard technique of taking the logarithmic derivative. Applying the logarithmic derivative to the generating function $ F_K $, we get
\begin{align*}
	x\frac{F_K'(x)}{F_K(x)}& = x\dv{x}\sum_{\alpha \succ 0}-\log(1-x^\alpha) = x\dv{x}\sum_{\alpha \succ 0}\sum_{k=1}^\infty \frac{x^{k\alpha}}{k} = \sum_{\alpha \succ 0}\sum_{k=1}^\infty \alpha x^{k\alpha}\\
	& = \sum_{\beta \succ 0}\left(\sum_{k \mid \beta}\frac{\beta}{k}\right)x^\beta = \sum_{\beta \succ 0}\sigma_K(\beta)x^\beta,
\end{align*}
hence
\[
	\sum_{\alpha \succ 0}\alpha p_K(\alpha)x^\alpha = \left(\sum_{\beta \succ 0}\sigma_K(\beta)x^\beta\right)\left(\sum_{\gamma \succeq 0}p_K(\gamma)x^\gamma\right).
\]
The equality follows by comparing the coefficients of $ x^\alpha $ on both sides.
\end{proof}
We describe next how to use Theorem~\ref{thmRec} to compute $ p_K(\alpha) $ when $ K $ is a real quadratic field. The lexicographical ordering on $ K $ is defined as follows: If $ \alpha $ and $ \beta $ are two elements in $ K $, let $ \alpha = a+b\omega_D $ and $ \beta = c+d\omega_D $ be their expressions in the integral basis $ (1, \omega_D) $. We write $ \alpha \lexle \beta $ if either $ a < c $, or $ a = c $ and $ b < d $. There is an inclusion between the relations $ \prec $ and $ \lexle $.
\begin{lemma}
\label{lemmaOrd}
Let $ K = \Q(\sqrt{D}) $ where $ D \in \Z_{\geq 2} $ is squarefree. If $ \alpha, \beta \in K $, then
\[
	\alpha \prec \beta \Longrightarrow \alpha \lexle \beta.
\]
\end{lemma}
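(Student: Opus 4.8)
The plan is to rephrase the claim in terms of a single element and then read off the sign of its first coordinate in the integral basis $(1,\omega_D)$. First I would write $\alpha = a+b\omega_D$ and $\beta = c+d\omega_D$ with $a,b,c,d\in\Q$, and set $\gamma := \beta-\alpha = (c-a)+(d-b)\omega_D$. By definition, $\alpha\prec\beta$ is equivalent to $\gamma\succ 0$, while $\alpha\lexle\beta$ certainly holds whenever $c-a>0$ (the first clause in the definition of the lexicographic order). So the whole lemma reduces to the following assertion: if $\gamma = e+f\omega_D$ with $e,f\in\Q$ satisfies $\gamma\succ 0$, then $e>0$.

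To prove the reduced statement I would invoke the criterion recalled in Section~\ref{secPrelim} that in a real quadratic field $\gamma\succ 0$ means exactly $\gamma>0$ and $\gamma'>0$, i.e.
\[
	e+f\omega_D>0\quad\text{and}\quad e+f\omega_D'>0
\]
as inequalities of real numbers, together with the facts $\omega_D>0$ and $\omega_D' = -\xi_D<0$ (here $\xi_D>0$ in both congruence classes since $D\geq 2$). Then I would distinguish cases according to whether $f$ is zero, positive, or negative: if $f=0$, then $\gamma=e$ and $\gamma>0$ gives $e>0$; if $f>0$, then $f\omega_D'=-f\xi_D<0$, so $e+f\omega_D'>0$ forces $e>f\xi_D>0$; and if $f<0$, then $f\omega_D<0$, so $e+f\omega_D>0$ forces $e>-f\omega_D>0$. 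In every case $e>0$, and applying this to $\gamma=\beta-\alpha$ yields $c-a>0$, hence $\alpha\lexle\beta$.

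I do not expect any serious obstacle here; the argument is essentially immediate. The only mild point to watch is that for $D\equiv 1\pmod 4$ one cannot isolate $e$ simply by adding $\gamma$ and $\gamma'$ (that produces $2e+f$ rather than $2e$), which is precisely why the short case split on the sign of $f$ is convenient — and this case split also disposes of $D\equiv 2,3\pmod 4$ uniformly. It is worth remarking afterwards that the converse implication fails, e.g.\ $\omega_D\lexle 1$ but $\omega_D\not\prec 1$ since $1-\omega_D<0$ under the embedding sending $\sqrt D$ to its positive square root; this is why the lemma is stated only as an implication.
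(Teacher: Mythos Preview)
Your proof is correct and essentially the same approach as the paper's: both are short sign analyses exploiting $\omega_D>0>\omega_D'$. The only cosmetic difference is that you argue directly by passing to the difference $\gamma=\beta-\alpha$ and splitting on the sign of its second coordinate (thereby showing the stronger fact that the first coordinate is strictly positive), whereas the paper argues the contrapositive, splitting on the coordinate comparisons between $\alpha$ and $\beta$.
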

\begin{proof}
Let $ \alpha = a+b\omega_D $ and $ \beta = c+d\omega_D $. Suppose that $ \alpha \lexle \beta $ is not satisfied, so that either $ a > c $, or $ a = c $ and $ b \geq d $.

Assume first that $ a > c $. If $ b \geq d $, then $ \alpha = a+b\omega_D > c+d\omega_D = \beta $. If $ b < d $, then $ b\omega_D' > d\omega_D' $, and hence $ \alpha' = a+b\omega_D' > c+d\omega_D' = \beta' $. In either case, $ \alpha \prec \beta $ is not satisfied.

Assume next that $ a = c $ and $ b \geq d $. We get $ \alpha = a+b\omega_D \geq c+d\omega_D = \beta $, so that $ \alpha \prec \beta $ is again not satisfied.
\end{proof}

Let $ \alpha = x+y\omega_D $ where $ x \in \Z_{\geq 1} $ and $ y \in \Z $. The element $ \alpha $ is totally positive if and only if $ \alpha = x+y\omega_D > 0 $ and $ \alpha' = x+y\omega_D' > 0 $, which holds if and only if $ \lceil -x/\omega_D \rceil \leq y \leq \lfloor x/\xi_D \rfloor $.

When we compute the values of $ p_K $ for different $ \alpha \in \O_K^+ $ using Theorem~\ref{thmRec}, we make sure to run through the successive $ \alpha $'s in the lexicographical ordering. This way, the values $ p_K(\alpha-\beta) $ appearing in the sum are already computed by the time we compute $ p_K(\alpha) $. The algorithm written in pseudocode is given below.

\begin{algorithm}
\begin{algorithmic}
\State\textbf{Input:} A real quadratic field $ K $ with the integral basis $ (1, \omega_D) $ and a positive integer $ M$.
\State\textbf{Output:} A table $ T $ of the values $ p_K(\alpha) $ for $ \alpha \in \O_K^+ $ of the form $ \alpha = x+y\omega_D $, where $ x = 1, \dots, M $.
\State Set $ p_K(0) = 1 $.
\For{$ x = 1, \ldots, M $}
	\For{$ y = \lceil -x/\omega_D \rceil, \ldots, \lfloor x/\xi_D \rfloor $}
		\State Compute $ p_K(x+y\omega_D) $ from~\eqref{eqRec} and set $ T[x, y] = p_K(x+y\omega_D) $.
	\EndFor
\EndFor
\State Output $ T $.
\end{algorithmic}
\caption{Computation of $ p_K(x+y\omega_D) $}
\label{algP}
\end{algorithm}

As an example, we computed the number of partitions in the field $ K = \Q(\sqrt{2}) $ (Figure~\ref{figPK}).

\begin{figure}[ht]
    \centering
    \begin{tikzpicture}[scale=0.8]
    \draw[->] (0, 0)--(11.5, 0);
    \node[right] at (11.5, 0) {$ x $};
    \draw[->] (0, 0)--(0, 8.5);
    \node[left] at (0, 8.5) {$ y $};
    \node[below] at (0.5, 0) {$ \mathbf{0} $};
    \node[below] at (1.5, 0) {$ \mathbf{1} $};
    \node[below] at (2.5, 0) {$ \mathbf{2} $};
    \node[below] at (3.5, 0) {$ \mathbf{3} $};
    \node[below] at (4.5, 0) {$ \mathbf{4} $};
    \node[below] at (5.5, 0) {$ \mathbf{5} $};
    \node[below] at (6.5, 0) {$ \mathbf{6} $};
    \node[below] at (7.5, 0) {$ \mathbf{7} $};
    \node[below] at (8.5, 0) {$ \mathbf{8} $};
    \node[below] at (9.5, 0) {$ \mathbf{9} $};
    \node[below] at (10.5, 0) {$ \mathbf{10} $};
    \node[left] at (0, 0.5) {$ \mathbf{0} $};
    \node[left] at (0, 1.5) {$ \mathbf{1} $};
    \node[left] at (0, 2.5) {$ \mathbf{2} $};
    \node[left] at (0, 3.5) {$ \mathbf{3} $};
    \node[left] at (0, 4.5) {$ \mathbf{4} $};
    \node[left] at (0, 5.5) {$ \mathbf{5} $};
    \node[left] at (0, 6.5) {$ \mathbf{6} $};
    \node[left] at (0, 7.5) {$ \mathbf{7} $};
    \draw[-] (0, 1)--(11, 1);
    \draw[-] (0, 2)--(11, 2);
    \draw[-] (0, 3)--(11, 3);
    \draw[-] (0, 4)--(11, 4);
    \draw[-] (0, 5)--(11, 5);
    \draw[-] (0, 6)--(11, 6);
    \draw[-] (0, 7)--(11, 7);
    \draw[-] (0, 8)--(11, 8);
    \draw[-] (1, 0)--(1, 8);
    \draw[-] (2, 0)--(2, 8);
    \draw[-] (3, 0)--(3, 8);
    \draw[-] (4, 0)--(4, 8);
    \draw[-] (5, 0)--(5, 8);
    \draw[-] (6, 0)--(6, 8);
    \draw[-] (7, 0)--(7, 8);
    \draw[-] (8, 0)--(8, 8);
    \draw[-] (9, 0)--(9, 8);
    \draw[-] (10, 0)--(10, 8);
    \draw[-] (11, 0)--(11, 8);
    \node at (0.5, 0.5) {1};
    \node at (1.5, 0.5) {1};
    \node at (2.5, 0.5) {2};
    \node at (3.5, 0.5) {3};
    \node at (4.5, 0.5) {6};
    \node at (5.5, 0.5) {10};
    \node at (6.5, 0.5) {19};
    \node at (7.5, 0.5) {34};
    \node at (8.5, 0.5) {62};
    \node at (9.5, 0.5) {108};
    \node at (10.5, 0.5) {190};
    \node at (0.5, 1.5) {0};
    \node at (1.5, 1.5) {0};
    \node at (2.5, 1.5) {1};
    \node at (3.5, 1.5) {2};
    \node at (4.5, 1.5) {4};
    \node at (5.5, 1.5) {8};
    \node at (6.5, 1.5) {16};
    \node at (7.5, 1.5) {29};
    \node at (8.5, 1.5) {54};
    \node at (9.5, 1.5) {98};
    \node at (10.5, 1.5) {175};
    \node at (0.5, 2.5) {0};
    \node at (1.5, 2.5) {0};
    \node at (2.5, 2.5) {0};
    \node at (3.5, 2.5) {1};
    \node at (4.5, 2.5) {3};
    \node at (5.5, 2.5) {6};
    \node at (6.5, 2.5) {12};
    \node at (7.5, 2.5) {23};
    \node at (8.5, 2.5) {44};
    \node at (9.5, 2.5) {81};
    \node at (10.5, 2.5) {149};
    \node at (0.5, 3.5) {0};
    \node at (1.5, 3.5) {0};
    \node at (2.5, 3.5) {0};
    \node at (3.5, 3.5) {0};
    \node at (4.5, 3.5) {0};
    \node at (5.5, 3.5) {2};
    \node at (6.5, 3.5) {6};
    \node at (7.5, 3.5) {13};
    \node at (8.5, 3.5) {28};
    \node at (9.5, 3.5) {56};
    \node at (10.5, 3.5) {107};
    \node at (0.5, 4.5) {0};
    \node at (1.5, 4.5) {0};
    \node at (2.5, 4.5) {0};
    \node at (3.5, 4.5) {0};
    \node at (4.5, 4.5) {0};
    \node at (5.5, 4.5) {0};
    \node at (6.5, 4.5) {2};
    \node at (7.5, 4.5) {6};
    \node at (8.5, 4.5) {16};
    \node at (9.5, 4.5) {33};
    \node at (10.5, 4.5) {69};
    \node at (0.5, 5.5) {0};
    \node at (1.5, 5.5) {0};
    \node at (2.5, 5.5) {0};
    \node at (3.5, 5.5) {0};
    \node at (4.5, 5.5) {0};
    \node at (5.5, 5.5) {0};
    \node at (6.5, 5.5) {0};
    \node at (7.5, 5.5) {0};
    \node at (8.5, 5.5) {4};
    \node at (9.5, 5.5) {13};
    \node at (10.5, 5.5) {33};
    \node at (0.5, 6.5) {0};
    \node at (1.5, 6.5) {0};
    \node at (2.5, 6.5) {0};
    \node at (3.5, 6.5) {0};
    \node at (4.5, 6.5) {0};
    \node at (5.5, 6.5) {0};
    \node at (6.5, 6.5) {0};
    \node at (7.5, 6.5) {0};
    \node at (8.5, 6.5) {0};
    \node at (9.5, 6.5) {3};
    \node at (10.5, 6.5) {12};
    \node at (0.5, 7.5) {0};
    \node at (1.5, 7.5) {0};
    \node at (2.5, 7.5) {0};
    \node at (3.5, 7.5) {0};
    \node at (4.5, 7.5) {0};
    \node at (5.5, 7.5) {0};
    \node at (6.5, 7.5) {0};
    \node at (7.5, 7.5) {0};
    \node at (8.5, 7.5) {0};
    \node at (9.5, 7.5) {0};
    \node at (10.5, 7.5) {1};
    \end{tikzpicture}
	\caption{\label{figPK}The values $ p_K(x+y\sqrt{2}) $ of the partition function in $ K = \Q(\sqrt{2}) $.}
\end{figure}

\section{Parity of the partition function}
\label{secParity}

In this section, we show that if $ K = \Q(\sqrt{D}) $, where $ D \equiv 2, 3 \pmod{4} $, then there exist infinitely many $ n \in \Z_{\geq 1} $ such that $ p_K(n) $ has a prescribed parity. For this purpose we define a ``cumulative" partition function $ P_K $ on $ \Z_{\geq 1} $ by
\[
	P_K(n) = \sum_{\alpha\succ 0, \Tr\alpha = 2n}p_K(\alpha).
\]
The sum runs over all $ \alpha \in \O_K^+ $ with trace equal to $ 2n $. We set $ P_K(0) = 1 $.
\begin{lemma}
\label{lemmaParity}
Let $ K = \Q(\sqrt{D}) $ where $ D \in \Z_{\geq 2} $ is squarefree. If $ n \in \Z_{\geq 0} $, then
\[
	P_K(n) \equiv p_K(n)\pmod{2}.
\]
\end{lemma}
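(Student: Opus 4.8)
The plan is to prove the congruence by an involution argument built on Galois conjugation. For $K = \Q(\sqrt D)$ the nontrivial automorphism $\alpha \mapsto \alpha'$ is a ring automorphism of $K$ that carries $\O_K$ onto itself and interchanges the two real embeddings; in particular it restricts to a bijection of $\O_K^+$ onto itself. Hence applying conjugation to a partition $\alpha = \alpha_1 + \dots + \alpha_k$ with all $\alpha_i \in \O_K^+$ yields a partition $\alpha' = \alpha_1' + \dots + \alpha_k'$, and since conjugation is an involution this is a bijection between the partitions of $\alpha$ and those of $\alpha'$. Thus $p_K(\alpha) = p_K(\alpha')$ for every $\alpha \in \O_K^+$, and this is the symmetry I want to exploit.

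Fix $n \geq 1$ and set $S_n = \{\alpha \in \O_K^+ : \Tr\alpha = 2n\}$. Writing $\alpha = n + t$ with $t = \alpha - n$, the condition $\Tr\alpha = 2n$ is equivalent to $\Tr t = 0$, so that $t' = -t$ and $\alpha' = n - t$; in particular conjugation maps $S_n$ into itself. One checks at once, in both cases $D \equiv 2,3 \pmod{4}$ and $D \equiv 1 \pmod{4}$, that the trace-zero elements of $\O_K$ are exactly the integer multiples of $\sqrt D$, and then total positivity of $n + t$ forces $|t| < n$, so $S_n$ is finite. Therefore $\alpha \mapsto \alpha'$ is an involution of the finite set $S_n$; since $p_K$ is constant on its orbits, the elements lying in a two-element orbit $\{\alpha, \alpha'\}$ together contribute an even number to $\sum_{\alpha \in S_n} p_K(\alpha)$, so $P_K(n) = \sum_{\alpha \in S_n} p_K(\alpha) \equiv \sum_{\alpha \in S_n,\ \alpha = \alpha'} p_K(\alpha) \pmod{2}$.

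It remains to determine these fixed points. An element $\alpha = n + t$ of $S_n$ satisfies $\alpha = \alpha'$ exactly when $t = -t$, i.e. $t = 0$, i.e. $\alpha = n$; conversely $n \in S_n$ because $n$ is a totally positive rational integer of trace $2n$. Hence $S_n$ has the single conjugation-fixed element $n$, and we conclude $P_K(n) \equiv p_K(n) \pmod{2}$. The case $n = 0$ is immediate from the conventions $P_K(0) = p_K(0) = 1$ (and indeed $S_0 = \emptyset$, since total positivity forces $\Tr\alpha > 0$). I expect no genuine difficulty here: the only step deserving care is pinning down the unique conjugation-fixed element of $S_n$ as the rational integer $n$ and checking that it really lies in $S_n$, which is precisely what makes the residue come out to $p_K(n)$ instead of $0$.
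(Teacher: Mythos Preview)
Your proof is correct and follows essentially the same approach as the paper: both use the conjugation involution $\alpha\mapsto\alpha'$ on the set of totally positive elements of trace $2n$, pair off the non-fixed elements (contributing an even amount since $p_K(\alpha)=p_K(\alpha')$), and identify $n$ as the unique fixed point. Your write-up is somewhat more detailed---explicitly verifying finiteness of $S_n$, the description of trace-zero integers, and that $n\in S_n$---but the underlying argument is identical.
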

\begin{proof}
The totally positive elements $ \alpha = a+b\omega_D $ with $ b\neq 0 $ can be paired with their conjugates $ \alpha' $. Since $ p_K(\alpha) = p_K(\alpha') $, we obtain
\begin{align*}
	P_K(n)& = p_K(n)+\sum_{\substack{\alpha\succ 0, \Tr\alpha = 2n\\\alpha \neq n}}p_K(\alpha)+p_K(\alpha')\\
	& = p_K(n)+2\sum_{\substack{\alpha\succ 0, \Tr\alpha = 2n\\\alpha \neq n}}p_K(\alpha)\\
	& \equiv p_K(n) \pmod{2}.
\end{align*}
\end{proof}
Next we discuss the theory of prefabs (as described in~\cite[Section~4]{Wil}) and how it relates to partitions of totally positive integers. Let $ \P $ be a collection of objects, called \emph{prefabs}. We assume that there is an operation $ \oplus $ on $ \P $, so that for two prefabs $ P_1 $ and $ P_2 $, we can take their composition $ P_1\oplus P_2 $. We define a function $ \ord: \P \to \Z_{\geq 0} $, which assigns an order to each prefab. The function is assumed to be additive, so that $ \ord(P_1\oplus P_2) = \ord(P_1)+\ord(P_2) $. There are also certain objects in $ \P $, called \emph{primes}, such that each element of $ \P $ can be written uniquely as a composition of primes.

From now on, we work under the assumption that $ K = \Q(\sqrt{D}) $, where $ D \equiv 2, 3 \pmod{4} $. In this setting, the prefabs are partitions of totally positive integers and the primes are totally positive integers themselves. The order function is defined on the totally positive integers by $ \alpha \mapsto \frac{\Tr\alpha}{2} $ and is then extended additively.

Let $ a_n $ be the number of $ \alpha \in \O_K^+ $ of order $ n $, that is,
\[
	a_n = \#\left\{\alpha \in \O_K^+|\; \Tr\alpha = 2n\right\}.
\]
The function $ P_K(n) $ then gives the number of prefabs (i.e. partitions of totally positive integers) of order $ n $. We know from the general theory of prefabs (see~\cite[Lemma~3]{Wil}) that $ P_K(n) $ satisfies the product formula
\[
	\sum_{n=0}^\infty P_K(n)x^n = \prod_{j = 1}^\infty \frac{1}{(1-x^j)^{a_j}}.
\]
This is easy to see intuitively: we can choose at most $ a_j $ different prime objects of order $ j $ in the product.

By taking the logarithmic derivative, a generating function of this form can be quickly shown to satisfy the recurrence
\[
	nP_K(n) = \sum_{k=1}^n\left(\sum_{d \mid k}d a_d\right)P_K(n-k),\qquad n \in \Z_{\geq 1}.
\]
Now we are ready to prove Theorem~\ref{thmParity}.
\begin{proof}[Proof of Theorem~\ref{thmParity}]
First we prove the statement for the function $ P_K $ instead of $ p_K $. Then we appeal to Lemma~\ref{lemmaParity}.

A crucial observation is that the $ a_d $ are all odd. Thus
\[
	nP_K(n) = \sum_{k=1}^n\left(\sum_{d\mid k}da_d\right)P_K(n-k) \equiv \sum_{k=1}^n\left(\sum_{d\mid k}d\right)P_K(n-k) = \sum_{k=1}^n \sigma(k)P_K(n-k)\pmod{2}.
\]
Now we proceed analogously as we would in the case of the integer partition function. Observe that $ \sigma(k) $ is odd if and only if $ k = m^2 $ or $ k = 2m^2 $ for some $ m \in \Z_{\geq 1} $. If we let
\[
	S(n) := \{m^2|\;1 \leq m^2 \leq n\}\cup\{2m^2|\;2 \leq 2m^2 \leq n\},
\]
then
\[
	nP_K(n) \equiv \sum_{k \in S(n)}P_K(n-k) \pmod{2}.
\]
In order to prove that there exist infinitely many $ n $ such that $ P_K(n) $ is odd, suppose for contradiction that $ P_K(n) $ is even for all $ n \geq n_0 $. Choose $ n = 2m^2+1 $ such that for every $ k \in S(n)\setminus\{2m^2\} $, $ n-k \geq n_0 $. Then
\[
	(2m^2+1)P_K(n) \equiv \sum_{k\in S(n)\setminus\{2m^2\}}P_K(n-k)+P_K(1) \equiv 1 \pmod{2},
\]
hence $ P_K(n) $ is odd, a contradiction. The proof that $ P_K(n) $ is even for infinitely many $ n $ is similar.
\end{proof}

\section{Values of the partition function}
\label{secVal1}

The general question we investigate in the present section is: Given a positive integer $ m $, for which real quadratic fields $ K $ does there exist $ \alpha \in \O_K^+ $ such that $ p_K(\alpha)=m $?

For $ m \in \{1, 2\} $, the answer is immediate because $ p_K(1) = 1 $ and $ p_K(2) = 2 $ for every real quadratic field $ K $. In fact, every unit $ \varepsilon \in \O_K^+ $ is indecomposable, i.e., $ p_K(\varepsilon) = 1 $, from which $ p_K(1)=1 $ follows as a special case. If $ 2 = \alpha_1+\alpha_2+\ldots+\alpha_j $ is an arbitrary partition of $ 2 $, each part $ \alpha_i $ satisfies $ \alpha_i \preceq 2 $. Thus the only possibilities are $ \alpha_i = 1 $ or $ 2 $, leading to the two partitions $ 2 = 1+1 $. 

\begin{theorem}
\label{thmDn}
Let $ K = \Q(\sqrt{D}) $ where $ D \in \Z_{\geq 2} $ is squarefree and let $ n \in \Z_{\geq 1} $. There exists $ D_n > 0 $ such that if $ D > D_n $, then
\[
	p_K(n) = p(n).
\]
Moreover, let $ E_n := \left\lfloor\frac{n}{2}\right\rfloor^2 $ and
\[
	F_n := \begin{cases}
		(n-1)^2,&n\text{ even},\\
		n^2,&n\text{ odd}.
	\end{cases}
\]
\begin{itemize}
\item If $ D \equiv 2, 3 \pmod{4} $ and $ D> E_n $, then $ p_K(n) = p(n) $.
\item If $ D \equiv 1 \pmod{4} $ and $ D > F_n $, then $ p_K(n) = p(n) $.
\end{itemize}
The bounds $ E_n $ and $ F_n $ are optimal.
\end{theorem}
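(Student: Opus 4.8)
\emph{Setup.} The starting point is the trivial bound $p_K(n)\ge p(n)$: every ordinary partition $n=n_1+\dots+n_k$ with $n_i\in\Z_{\ge 1}$ is also a partition of $n$ in $\O_K^+$, and distinct integer partitions stay distinct. Moreover equality holds precisely when \emph{no} partition of $n$ in $\O_K^+$ uses a part outside $\Z$. So I would call a partition of $n$ \emph{mixed} if it contains a part $\alpha_i=a_i+b_i\omega_D$ with $b_i\neq 0$, and reduce the entire theorem to determining exactly for which $D$ a mixed partition of $n$ exists.

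\emph{The core lower bound.} Write a mixed partition as $n=\sum_i\alpha_i$ with $\alpha_i=a_i+b_i\omega_D$, so that $\sum_i a_i=n$, $\sum_i b_i=0$, and the index sets $I_+=\{i:b_i>0\}$ and $I_-=\{i:b_i<0\}$ are both nonempty. For $i\in I_+$, total positivity of $\alpha_i'=a_i-b_i\xi_D$ forces $a_i>b_i\xi_D\ge\xi_D$; for $i\in I_-$, total positivity of $\alpha_i=a_i+b_i\omega_D$ forces $a_i>|b_i|\omega_D\ge\omega_D$. Since $\sqrt D$ is irrational (hence so are $\xi_D$ and $\omega_D$), this upgrades to $a_i\ge\lceil\xi_D\rceil$ on $I_+$ and $a_i\ge\lceil\omega_D\rceil$ on $I_-$, and discarding the nonnegative contribution of the parts with $b_i=0$ gives
\[
  n \;=\; \sum_i a_i \;\ge\; \lceil\xi_D\rceil+\lceil\omega_D\rceil .
\]
For $D\equiv 2,3\pmod{4}$ this reads $n\ge 2\lceil\sqrt D\rceil$, and for $D\equiv 1\pmod{4}$, using $\omega_D=\xi_D+1$, it reads $n\ge 2\lceil\xi_D\rceil+1$. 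Taking contrapositives and unwinding the ceilings then yields: if $D>\lfloor n/2\rfloor^2=E_n$ (resp.\ $D>F_n$), then $n$ has no mixed partition, so $p_K(n)=p(n)$. Since $F_n\ge E_n$ for all $n\ge 2$, the choice $D_n:=F_n$ proves the first assertion.

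\emph{Optimality.} I would establish the converse implication by exhibiting a mixed partition whenever the trace inequality permits one. If $D\equiv 2,3\pmod{4}$ and $n\ge 2\lceil\sqrt D\rceil$, the elements $\alpha=\lceil\sqrt D\rceil+\sqrt D$ and $\beta=\lceil\sqrt D\rceil-\sqrt D$ are totally positive with $\alpha+\beta=2\lceil\sqrt D\rceil\le n$, so $\alpha$, $\beta$ together with $n-2\lceil\sqrt D\rceil$ copies of $1$ form a mixed partition and $p_K(n)\ge p(n)+1$. If $D\equiv 1\pmod{4}$ and $n\ge 2\lceil\xi_D\rceil+1$, the pair $\alpha=\lceil\xi_D\rceil+\omega_D$, $\beta=(\lceil\xi_D\rceil+1)-\omega_D$ is totally positive with $\alpha+\beta=2\lceil\xi_D\rceil+1\le n$ and does the same job. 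Combined with the previous paragraph this gives $p_K(n)=p(n)$ if and only if $D>E_n$ when $D\equiv 2,3\pmod{4}$, and if and only if $D>F_n$ when $D\equiv 1\pmod{4}$, which is the asserted optimality.

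\emph{Where the care goes.} Everything above is elementary; the only delicate point is the bookkeeping that converts $n\ge\lceil\xi_D\rceil+\lceil\omega_D\rceil$ into the precise thresholds. One repeatedly uses irrationality of $\sqrt D$ to move freely between $\lceil x\rceil\ge m$, $\lfloor x\rfloor\ge m$ and $x>m-1$, and one must keep the parity of $n$ straight: for $n$ even $2\lceil n/2\rceil-1=n-1$ while for $n$ odd it equals $n$, which is exactly the origin of the two cases in $F_n$, and the extra $+1$ separating the $D\equiv 2,3$ and $D\equiv 1$ bounds is nothing but the shift $\omega_D=\xi_D+1$. I expect no genuine obstacle beyond making these inequalities exactly sharp.
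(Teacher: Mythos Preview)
Your proof is correct and in fact more streamlined than the paper's. Both arguments reduce the question to deciding exactly when a mixed partition of $n$ exists, and both exhibit essentially the same explicit two-part mixed partitions (padded by $1$'s) for optimality. The difference lies in the necessary direction. The paper orders the parts lexicographically so that $a_1\ge a_2\ge\cdots$, notes $a_2\le\lfloor n/2\rfloor$, and then squeezes each $b_i$ for $i\ge 2$ individually via $-a_i/\omega_D<b_i<a_i/\xi_D$; when $D\equiv 1\pmod 4$ this forces a further case split ($a_2=k$ versus $a_2\le k-1$) before one can conclude $b_i=0$. Your single trace inequality $n\ge\lceil\xi_D\rceil+\lceil\omega_D\rceil$, obtained by selecting one part from $I_+$ and one from $I_-$, bypasses that case analysis entirely and unwinds directly to $E_n$ and $F_n$ via the identity $\omega_D=\xi_D+1$ in the $D\equiv 1$ case. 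The paper's route has the minor virtue of pinning down \emph{every} $b_i$ in a would-be mixed partition, but for the theorem as stated your argument is the cleaner one.
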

\begin{proof}
We first prove the existence of $ D_n $. Let
\[
	n = \alpha_1+\alpha_2+\dots+\alpha_j,\qquad \alpha_i \in \O_K^+,
\]
be a non-trivial partition of $ n $, where $ j \geq 2 $ and $ \alpha_i = a_i+b_i\omega_D $ for $ 1 \leq i \leq j $. Since $ \alpha_i \preceq n $, we get $ a_i \leq n $ for $ 1 \leq i \leq j $. We want to show that for $ D $ large enough, all the $ b_i $'s must be equal to zero.

The inequalities $ a_i+b_i\omega_D > 0 $ and $ a_i+b_i\omega_D' > 0 $ imply
\[
	-\frac{n}{\omega_D} \leq -\frac{a_i}{\omega_D} < b_i < \frac{a_i}{\xi_D} \leq \frac{n}{\xi_D}.
\]
Because $ \omega_D \to +\infty $ and $ \xi_D \to +\infty $ as $ D \to \infty $, there exists $ D_n $ such that if $ D > D_n $, then $ b_i = 0 $ for $ 1 \leq i \leq j $.

Without loss of generality, we may assume that the parts $ \alpha_i $ are arranged lexicographically:
\[
	\alpha_1 \lexgeq \alpha_2 \lexgeq \dots \lexgeq \alpha_j,
\]
and in particular $ a_1 \geq a_2 \geq \dots \geq a_j $. Now $ n = (a_1+a_2+\dots+a_j)+(b_1+b_2+\dots+b_j)\omega_D $ implies $ \sum_{i=1}^j a_i = n $ and $ \sum_{i=1}^j b_i = 0 $. Write $ n $ as $ n = 2k $ or $ n = 2k+1 $ depending on its parity. We have $ 2a_2 \leq a_1+a_2 \leq n $, hence $ a_2 \leq k $.

\

Assume $ D \equiv 2, 3 \pmod{4} $. We will show that if $ D > E_n = k^2 $, then all the $ b_i $'s must be equal to zero. Since $ a_2 \leq k $, we also have $ a_i \leq k $ for $ 2 \leq i \leq j $, and
\[
	-1 < -\frac{k}{\sqrt{D}} \leq -\frac{a_i}{\omega_D} < b_i < \frac{a_i}{\xi_D} \leq \frac{k}{\sqrt{D}} < 1.
\]
Thus $ b_i = 0 $ for $ 2 \leq i \leq j $ and since $ \sum_{i=1}^j b_i = 0 $, we also get $ b_1 = 0 $.

Now we show that the bound $ E_n $ is optimal, i.e., if $ D \leq E_n $, then $ p_K(n) > p(n) $. For $ n \in \{1, 2, 3\} $, the bounds are $ E_1 = 0 $, $ E_2 = 1 $, $ E_3 = 1 $. Let $ n \geq 4 $ and $ D \leq E_n $. We can assume $ D < E_n = k^2 $ because $ D $ is squarefree. If $ n = 2k $, then
\[
	n = \left(k+\sqrt{D}\right)+\left(k-\sqrt{D}\right)
\]
is a partition of $ n $ and if $ n = 2k+1 $, then
\[
	n = \left(k+1+\sqrt{D}\right)+\left(k-\sqrt{D}\right)
\]
is a partition of $ n $. Thus $ p_K(n) > p(n) $ in both cases.

\

Next, we assume $ D \equiv 1 \pmod{4} $ and we want to show that if $ D > F_n $, then all the $ b_i $'s must be equal to zero. In the case $ n = 2k $, the bound is $ F_n = (2k-1)^2 $. We get
\[
	\omega_D = \frac{1+\sqrt{D}}{2} > k,\qquad \xi_D = \frac{\sqrt{D}-1}{2} > k-1.
\]
If $ a_2 = k $, then $ a_1 \geq a_2 $ implies $ a_1 = k $ and the partition is of the form
\[
	2k = \left(k+b_1\omega_D\right)+\left(k+b_2\omega_D\right),
\]
from which it follows that $ b_2 = -b_1 $. Since $ \alpha_1 \lexgeq \alpha_2 $, we have $ b_1 \geq 0 $ and $ b_2 \leq 0 $. We get
\[
	-1< -\frac{k}{\omega_D} = -\frac{a_2}{\omega_D} < b_2 \leq 0,
\]
hence $ b_2 = 0 $ and $ b_1 = 0 $.

If $ a_2 \leq k-1 $, then $ a_i \leq k-1 $ for $ 2 \leq i \leq j $, and
\[
	-1 < -\frac{k-1}{\omega_D} < -\frac{a_i}{\omega_D} < b_i < \frac{a_i}{\xi_D} \leq \frac{k-1}{\xi_D} < 1,
\]
hence $ b_i = 0 $ for $ 2 \leq i \leq j $, and consequently $ b_1 = 0 $.

In the case $ n = 2k+1 $, the bound is $ F_n = (2k+1)^2 $, and we can estimate
\[
	\omega_D = \frac{1+\sqrt{D}}{2} > k+1,\qquad \xi_D = \frac{\sqrt{D}-1}{2} > k.
\]
Thus we get
\[
	-1 < -\frac{k}{\omega_D} \leq -\frac{a_i}{\omega_D} < b_i < \frac{a_i}{\xi_D} \leq \frac{k}{\xi_D} < 1,
\]
for $ 2 \leq i \leq j $, and again $ b_i = 0 $ for all $ 1 \leq i \leq j $.

Finally, let us show that the bound $ F_n $ is optimal, i.e., if $ D \leq F_n $, then $ p_K(n) > p(n) $. For $ n \in \{1, 2\} $, the bounds are $ F_1 = F_2 = 1 $. Let $ n \geq 3 $ and $ D < F_n $. If $ n = 2k $ is even, then $ F_n = (2k-1)^2 $ and
\[
	n = \left(k+\omega_D\right)+\left(k-\omega_D\right)
\]
is a partition. If $ n = 2k+1 $ is odd, then $ F_n = (2k+1)^2 $ and
\[
	n = \left(k+1-\omega_D\right)+\left(k+\omega_D\right)
\]
is a partition. Thus $ p_K(n) > p(n) $ in both cases.
\end{proof}

Using the bounds $ E_n $ and $ F_n $ from the preceding theorem, we get the following corollary.

\begin{corollary}
\label{corP}
Let $ K = \Q(\sqrt{D}) $ where $ D \in \Z_{\geq 2} $ is squarefree.
\begin{itemize}
\item $ p_K(3) = 3 $ for $ D \in \{2, 3\} $ and $ D > 5 $,
\item $ p_K(4) = 5 $ for $ D > 5 $,
\item $ p_K(5) = 7 $ for $ D \equiv 2, 3 \pmod{4} $, $ D > 4 $, and for $ D \equiv 1 \pmod{4} $, $ D > 25 $,
\item $ p_K(6) = 11 $ for $ D \equiv 2, 3 \pmod{4} $, $ D > 9 $, and for $ D \equiv 1 \pmod{4} $, $ D > 25 $.
\end{itemize}
\end{corollary}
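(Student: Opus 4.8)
The plan is to read Corollary~\ref{corP} off Theorem~\ref{thmDn} by specializing $ n $ to $ 3, 4, 5, 6 $ and inserting the classical values $ p(3) = 3 $, $ p(4) = 5 $, $ p(5) = 7 $, $ p(6) = 11 $. The first step is to evaluate the two explicit bounds at these arguments: since $ E_n = \lfloor n/2 \rfloor^2 $ and $ F_n $ equals $ (n-1)^2 $ for even $ n $ and $ n^2 $ for odd $ n $, one gets $ E_3 = 1 $, $ F_3 = 9 $; then $ E_4 = 4 $, $ F_4 = 9 $; then $ E_5 = 4 $, $ F_5 = 25 $; and finally $ E_6 = 9 $, $ F_6 = 25 $.

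For $ n = 5 $ and $ n = 6 $ nothing further is needed: the ranges asserted in the corollary are verbatim the hypotheses $ D > E_n $ (for $ D \equiv 2, 3 \pmod 4 $) and $ D > F_n $ (for $ D \equiv 1 \pmod 4 $) of Theorem~\ref{thmDn}, so $ p_K(n) = p(n) $ follows at once.

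For $ n = 3 $ and $ n = 4 $ the corollary replaces the two residue-dependent bounds by the single condition $ D > 5 $ (augmented by $ D \in \{2, 3\} $ when $ n = 3 $), so the only thing to argue is that this uniform cutoff implies the relevant bound in each residue class. If $ D \equiv 2, 3 \pmod 4 $ is squarefree with $ D > 5 $, then $ D \geq 6 > 4 = E_4 \geq E_3 $, and Theorem~\ref{thmDn} applies. If $ D \equiv 1 \pmod 4 $ is squarefree with $ D > 5 $, then in fact $ D \geq 13 $, since no squarefree integer congruent to $ 1 \pmod 4 $ lies in the interval $ (5, 13) $; hence $ D > 9 = F_3 = F_4 $ and the theorem applies again. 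For $ n = 3 $ it remains to note that $ D = 2 $ and $ D = 3 $ lie in the residue classes $ 2, 3 \pmod 4 $ and exceed $ E_3 = 1 $, so $ p_K(3) = p(3) = 3 $ there as well.

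I expect no real obstacle: the substance is entirely contained in Theorem~\ref{thmDn}, and the argument is just the bookkeeping above. The one point worth a sentence of care is why the uniform cutoff $ D > 5 $ for $ n = 3, 4 $ is legitimate, namely that the squarefree $ D $'s omitted between the theorem's cutoffs and $ 5 $ fall in residue classes already covered by the smaller of the two bounds, while $ D = 5 $ itself is genuinely not covered by the $ D \equiv 1 \pmod 4 $ branch for these $ n $ and is correctly left out — consistently with $ 5 \in \D(5) $ from Theorem~\ref{thmD}, which shows $ p_K(4) \neq 5 $ and $ p_K(3) \neq 3 $ in $ \Q(\sqrt{5}) $.
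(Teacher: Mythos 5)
Your proof is correct and takes essentially the same approach as the paper, which derives the corollary directly from Theorem~\ref{thmDn} by evaluating $E_n$ and $F_n$ at $n=3,4,5,6$. The only point requiring any care is the uniform cutoff $D>5$ for $n=3,4$, and you justify it correctly by observing that a squarefree $D\equiv 1\pmod 4$ with $D>5$ must satisfy $D\geq 13>9=F_3=F_4$.
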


To be able to characterize the real quadratic fields $ K $ where $ m $ is in the range of $ p_K $ for $ m \in\{3, 5, 7, 11\} $, we turn our attention to the following question: How to decide whether there exists $ \alpha \in \O_K^+ $ such that $ p_K(\alpha)=m $ when $ K $ is fixed?

Recall that $ \varepsilon_+>1 $ denotes the smallest totally positive unit. If $ \alpha \in \O_K^+ $, then there exists an associated integral element $ \beta = \eta\alpha $ where $ \eta \in \O_K^+ $ is a totally positive unit such that
\[
	\varepsilon_+^{-1} < \frac{\beta}{\beta'} \leq \varepsilon_+.
\]
Given that $ p_K(\alpha) = p_K(\beta) $, we may restrict our attention to the integral elements satisfying these inequalities. If we express $ \alpha $ in the integral basis as $ \alpha = x+y\omega_D $, then $ x \geq 1 $ because $ \alpha $ is totally positive. Moreover, either $ \alpha $ or its conjugate $ \alpha' $ have the second coordinate non-negative, so we may also assume $ y\geq 0 $. An element $ \alpha $ with $ x \geq 1 $ and $ y \geq 0 $ satisfies $ 1 \leq \alpha/\alpha' $.

To summarize: In the search for $ \alpha \in \O_K^+ $ satisfying $ p_K(\alpha)=m $, we may consider only $ \alpha = x+y\omega_D $ with $ x \geq 1 $, $ y \geq 0 $, and such that
\[
	1 \leq \frac{\alpha}{\alpha'} \leq \varepsilon_+.
\]
If $ \alpha = x+y\omega_D \succ 0 $, then $ x > y\xi_D $, hence $ \alpha $ can be expressed as
\[
	\alpha = (\lceil y\xi_D \rceil+k)+y\omega_D,\qquad k \in \Z_{\geq 0}.
\]

We prove an easy lemma which helps with the computations.

\begin{lemma}
\label{lemmaK}
Let $ K = \Q(\sqrt{D}) $ where $ D \in \Z_{\geq 2} $ is squarefree.
\begin{enumerate}[i)]
\item For every $ m \in \Z_{\geq 1} $, there exists $ y \in \Z_{\geq 0} $ such that $ p_K\left(\lceil y\xi_D \rceil+y\omega_D\right) \geq m $.
\item Let $ \alpha = (\lceil y\xi_D \rceil+k)+y\omega_D $ with $ y, k \in \Z_{\geq 0} $. If $ \alpha/\alpha' \leq \varepsilon_+ $, then
\[
	y < \frac{(k+1)\varepsilon_+-k}{\xi_D+\omega_D}.
\]
\item Let $ \alpha_1 = (\lceil y_1\xi_D \rceil+k_1)+y_1\omega_D $ and $ \alpha_2 = (\lceil y_2\xi_D \rceil+k_2)+y_2\omega_D $ with $ k_1, y_1 \in \Z_{\geq 0} $ and $ k_2, y_2 \in \Z_{\geq 0} $. We have
\[
	y_1 \leq y_2\text{ and }k_1 < k_2 \Longrightarrow \alpha_1 \prec \alpha_2.
\]
\item If $ \alpha_1 $ and $ \alpha_2 $ are as in (iii), then
\[
	\alpha_1 \preceq \alpha_2 \Longrightarrow k_1 \leq k_2.
\]
\end{enumerate}
\end{lemma}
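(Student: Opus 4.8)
The four parts are essentially independent and all elementary; the plan is to dispatch (ii)--(iv) by a single shared computation and to obtain (i) from the growth estimate~\eqref{eqMei} (or, if one prefers, from a direct count of two-part partitions). The bookkeeping device for (ii)--(iv) is to write $\lceil y\xi_D\rceil = y\xi_D + t_y$ with \emph{slack} $t_y := \lceil y\xi_D\rceil - y\xi_D \in [0,1)$; note $t_y \in (0,1)$ as soon as $y \geq 1$, since $\xi_D$ is irrational. Then an element $\alpha = (\lceil y\xi_D\rceil + k) + y\omega_D$ satisfies $\alpha = k + t_y + y(\xi_D+\omega_D)$ and, using $\omega_D' = -\xi_D$, has conjugate $\alpha' = \lceil y\xi_D\rceil + k - y\xi_D = k + t_y$. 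Recall also that $\xi_D + \omega_D > 0$ in every case.

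For (ii): from $\alpha \leq \varepsilon_+\alpha'$ combined with the two strict bounds $\alpha > k + y(\xi_D+\omega_D)$ (dropping $t_y > 0$) and $\alpha' < k+1$ (using $t_y < 1$) I get $k + y(\xi_D+\omega_D) < \varepsilon_+\alpha' < \varepsilon_+(k+1)$, hence
\[
	y(\xi_D+\omega_D) < \varepsilon_+(k+1) - k = (k+1)\varepsilon_+ - k,
\]
and dividing by $\xi_D+\omega_D$ yields the claim (the case $y = 0$ is trivial). For (iii): write $\alpha_2 - \alpha_1 = \bigl(\lceil y_2\xi_D\rceil - \lceil y_1\xi_D\rceil + k_2 - k_1\bigr) + (y_2 - y_1)\omega_D$; its value is $\geq 0 + (k_2-k_1) + 0 \geq 1 > 0$, since $y_1 \leq y_2$ forces $\lceil y_1\xi_D\rceil \leq \lceil y_2\xi_D\rceil$ and $(y_2-y_1)\omega_D \geq 0$, while its conjugate equals $(k_2-k_1) + (t_{y_2} - t_{y_1}) > 1 + (-1) = 0$. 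Hence $\alpha_2 - \alpha_1 \succ 0$. For (iv): $\alpha_1 \preceq \alpha_2$ forces $(\alpha_2-\alpha_1)' \geq 0$, i.e. $(k_2 - k_1) + (t_{y_2} - t_{y_1}) \geq 0$, so $k_2 - k_1 \geq t_{y_1} - t_{y_2} > -1$, and integrality gives $k_1 \leq k_2$.

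For (i): put $\alpha_y := \lceil y\xi_D\rceil + y\omega_D$, so $\alpha_y \geq y\omega_D$ and $\alpha_y' = t_y = 1 - \{y\xi_D\}$ for $y \geq 1$, where $\{\,\cdot\,\}$ is the fractional part. Since $\xi_D$ is irrational, the sequence $\{y\xi_D\}$ is dense in $[0,1)$, so there are infinitely many $y$ with $\{y\xi_D\} < \tfrac12$, and for these $\alpha_y' > \tfrac12$, whence $\Nm(\alpha_y) = \alpha_y\alpha_y' > \tfrac12\, y\omega_D \to \infty$. By~\eqref{eqMei}, $p_K(\alpha_y) \to \infty$ along this subsequence, which proves (i). Alternatively, avoiding~\eqref{eqMei}: for such $y$ and each $b$ with $1 \leq b \leq \lfloor (y-1)/2\rfloor$ and $\{b\xi_D\} \geq \tfrac12$, the elements $\lceil b\xi_D\rceil + b\omega_D$ and $\alpha_y - (\lceil b\xi_D\rceil + b\omega_D)$ form a genuine two-part partition of $\alpha_y$ (the conjugate of the second part is $t_y - t_b > 0$), distinct $b$'s in this range give distinct partitions, and the number of admissible $b$ grows without bound, again by density of $\{b\xi_D\}$.

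There is no substantial obstacle here — the statement is genuinely routine — but the one point deserving care is (i): $\Nm(\alpha_y)$ is \emph{not} monotone in $y$ (it stays bounded when $y$ runs through denominators of convergents of $\xi_D$), so one must pass to a subsequence along which $\alpha_y'$ is bounded away from $0$ before invoking~\eqref{eqMei}. Everything else is arithmetic with the ceiling function, and tracking the slack $t_y \in [0,1)$ is exactly what keeps all the inequalities clean.
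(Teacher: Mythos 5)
Your proof is correct. Parts (ii)--(iv) are essentially the paper's own computations, just organized around the slack $t_y=\lceil y\xi_D\rceil-y\xi_D\in[0,1)$: the bound $\alpha>k+y(\xi_D+\omega_D)$ against $\alpha'<k+1$ for (ii), the estimate $(\alpha_2-\alpha_1)'>(k_2-k_1)-1\geq 0$ for (iii), and $k_1\leq\alpha_1'\leq\alpha_2'<k_2+1$ for (iv) all match. Where you genuinely diverge is (i). The paper argues by induction on $m$: given $y_0$ with $p_K(\alpha_{y_0})\geq m$, it picks $y>y_0$ with $\{y\xi_D\}<\{y_0\xi_D\}$ (again density of $\{y\xi_D\}$), checks that then $\alpha_y\succ\alpha_{y_0}$, and uses the strict monotonicity $\alpha\succ\beta\Rightarrow p_K(\alpha)>p_K(\beta)$ to step up by one. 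Your first variant instead drives $\Nm(\alpha_y)\to\infty$ along a subsequence with $\alpha_y'>\tfrac12$ and invokes Meinardus's asymptotic~\eqref{eqMei}; this is valid but imports a deep analytic theorem for an elementary fact. Your second variant, exhibiting $\gg 1$ distinct two-part partitions $\alpha_y=\bigl(\lceil b\xi_D\rceil+b\omega_D\bigr)+\bigl(\alpha_y-\lceil b\xi_D\rceil-b\omega_D\bigr)$ with $t_b\leq\tfrac12<t_y$ and $b\leq\lfloor(y-1)/2\rfloor$ (which correctly forces the two parts' $\omega_D$-coefficients $b$ and $y-b$ to determine $b$, so distinct $b$ give distinct partitions), is fully elementary and arguably more self-contained than the paper's, since it avoids the small auxiliary fact that $p_K$ is strictly increasing along $\succ$. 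Your closing caveat that $\Nm(\alpha_y)$ is not monotone in $y$, so one must restrict to $y$ with $\alpha_y'$ bounded away from $0$, is exactly the right point of care for the asymptotic route.
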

\begin{proof}
We prove (i) by induction. If $ m = 1 $, then we can choose $ y = 0 $. Suppose that the statement holds for $ m $ and let $ y_0 \in \Z_{\geq 0} $ be such that $ \alpha_0 = \lceil y_0\xi_D \rceil+y_0\omega_D $ satisfies $ p_K(\alpha_0) \geq m $. We will find $ y \in \Z_{\geq 1} $ such that if we let $ \alpha = \lceil y\xi_D \rceil+y\omega_D $, then $ \alpha \succ \alpha_0 $. It follows that $ p_K(\alpha) > p_K(\alpha_0) $, hence $ p_K(\alpha) \geq m+1 $.

Let $ \delta = y_0\xi_D-\lfloor y_0\xi_D \rfloor $. We know from the theory of rational approximation that there exists $ y \in \Z_{\geq 1} $, $ y > y_0 $ such that $ y\xi_D-\lfloor y\xi_D \rfloor < \delta $.

Since $ y > y_0 $, we get
\[
	\alpha = \lceil y\xi_D \rceil+y\omega_D > \lceil y_0\xi_D \rceil+y_0\omega_D = \alpha_0.
\]
The condition $ \alpha' > \alpha_0' $ is equivalent to $ \lceil y\xi_D \rceil-y\xi_D > \lceil y_0\xi_D \rceil-y_0\xi_D $, which is in turn equivalent to
\[
	y\xi_D-\lfloor y\xi_D \rfloor < y_0\xi_D-\lfloor y_0\xi_D \rfloor.
\]
This inequality is also satisfied due to our choice of $ y $, which finishes the proof of (i).

Next, we prove (ii). If $ \alpha/\alpha' \leq \varepsilon_+ $, then
\[
	\varepsilon_+ \geq \frac{\alpha}{\alpha'} = \frac{(\lceil y\xi_D \rceil+k)+y\omega_D}{(\lceil y\xi_D \rceil+k)+y\omega_D'} > \frac{y(\xi_D+\omega_D)+k}{(y\xi_D+1+k)+y\omega_D'} = \frac{y(\xi_D+\omega_D)+k}{k+1}
\]
since $ \xi_D = -\omega_D' $ by definition. The statement follows.

To prove (iii), we show that $ \alpha_2-\alpha_1 \succ 0 $. First,
\[
	\alpha_2-\alpha_1 = (\lceil y_2\xi_D \rceil-\lceil y_1\xi_D \rceil+k_2-k_1)+(y_2-y_1)\omega_D \geq (k_2-k_1)+(y_2-y_1)\omega_D > 0,
\]
and secondly,
\begin{align*}
	(\alpha_2-\alpha_1)'& = (\lceil y_2\xi_D \rceil-\lceil y_1\xi_D \rceil+k_2-k_1)+(y_2-y_1)\omega_D'\\
	& > (y_2\xi_D-y_1\xi_D-1+k_2-k_1)+(y_2-y_1)\omega_D' = k_2-k_1-1 \geq 0.
\end{align*}

Finally, we prove (iv). If $ \alpha_1 \preceq \alpha_2 $, then
\[
	k_1 \leq \left(\lceil y_1\xi_D \rceil+k_1\right)-y_1\xi_D = \alpha_1' \leq \alpha_2' = \left(\lceil y_2\xi_D \rceil+k_2\right)-y_2\xi_D < k_2+1,
\]
hence $ k_1 \leq k_2 $.
\end{proof}

To construct a table of all elements $ \alpha \in \O_K^+ $ (up to conjugation and multiplication by units) with $ p_K(\alpha) \leq m $, one can proceed as follows:
\begin{itemize}
\item Find $ k_{\max} $ such that $ p_K\left(k_{\max}\right)\geq m $.
\item Find $ y_{\max} $ such that $ y_{\max} \geq \left\lfloor\frac{\varepsilon_+}{\xi_D+\omega_D}\right\rfloor $ and $ p_K\left(\lceil y_{\max}\xi_D\rceil+y_{\max}\omega_D\right) \geq m $.
\item Compute the values of $ p_K(\alpha) $ for $ \alpha = \left(\lceil y\xi_D\rceil+k\right)+y\omega_D $ with $ k \leq k_{\max} $ and $ y \leq y_{\max} $.
\end{itemize}

The correctness of this procedure is justified by the following lemma.

\begin{lemma}
\label{lemmaAux}
Let $ K = \Q(\sqrt{D}) $ where $ D \in \Z_{\geq 2} $ is squarefree. Let $ \alpha = \left(\lceil y\xi_D\rceil+k\right)+y\omega_D $ where $ y, k \in \Z_{\geq 0} $. If $ \alpha/\alpha' \leq \varepsilon_+ $ and $ p_K(\alpha) \leq m $, then $ k \leq k_{\max} $ and $ y \leq y_{\max} $.
\end{lemma}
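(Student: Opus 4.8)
The plan is to prove the two bounds separately, each by contradiction, and in each case to exhibit an element of $\O_K^+$ that is \emph{strictly} dominated by $\alpha$ in the ordering $\succ$ and whose number of partitions is already known to be $\geq m$. The only ingredient beyond the hypotheses is the monotonicity of $p_K$: if $\beta,\gamma\in\O_K^+$ with $\beta\succ\gamma$, then $p_K(\beta)>p_K(\gamma)$. I would dispose of this in one line --- appending the part $\beta-\gamma\in\O_K^+$ to each of the $p_K(\gamma)$ partitions of $\gamma$ produces distinct partitions of $\beta$, none of which is the one-part partition $\{\beta\}$ (since $\gamma\neq 0$), so $p_K(\beta)\geq p_K(\gamma)+1$. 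Here $k_{\max}\in\Z_{\geq 1}$, and $y_{\max}\geq 1$ is forced by the requirement $y_{\max}\geq\lfloor\varepsilon_+/(\xi_D+\omega_D)\rfloor$ together with the fact that $\varepsilon_+>\xi_D+\omega_D$ in every real quadratic field.

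For the bound $k\leq k_{\max}$: assume $k>k_{\max}$. Then $\alpha-k_{\max}=\bigl(\lceil y\xi_D\rceil+(k-k_{\max})\bigr)+y\omega_D$ has positive first coordinate, and its conjugate equals $(\lceil y\xi_D\rceil-y\xi_D)+(k-k_{\max})\geq 0+1>0$, so $\alpha\succ k_{\max}$. Monotonicity then gives $p_K(\alpha)>p_K(k_{\max})\geq m$, contradicting $p_K(\alpha)\leq m$.

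For the bound $y\leq y_{\max}$: assume $y>y_{\max}$ and split according to whether $k=0$ or $k\geq 1$. If $k=0$, then $\alpha=\lceil y\xi_D\rceil+y\omega_D$, and $\alpha/\alpha'\leq\varepsilon_+$ together with Lemma~\ref{lemmaK}(ii) applied with $k=0$ gives $y<\varepsilon_+/(\xi_D+\omega_D)$; since $y$ is an integer, $y\leq\lfloor\varepsilon_+/(\xi_D+\omega_D)\rfloor\leq y_{\max}$, a contradiction (note this sub-case does not even use $p_K(\alpha)\leq m$). If $k\geq 1$, I would compare $\alpha$ with $\beta:=\lceil y_{\max}\xi_D\rceil+y_{\max}\omega_D\in\O_K^+$, which satisfies $p_K(\beta)\geq m$ by the choice of $y_{\max}$. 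Writing $\delta_t:=\lceil t\xi_D\rceil-t\xi_D\in[0,1)$, one has $\alpha'=k+\delta_y$ and $\beta'=\delta_{y_{\max}}$, so $(\alpha-\beta)'=k+\delta_y-\delta_{y_{\max}}>k-1\geq 0$; and since $y>y_{\max}$ forces $\lceil y\xi_D\rceil\geq\lceil y_{\max}\xi_D\rceil$ (the ceiling is non-decreasing), the first coordinate $\lceil y\xi_D\rceil-\lceil y_{\max}\xi_D\rceil+k+(y-y_{\max})\omega_D$ is positive as well. Hence $\alpha\succ\beta$, so $p_K(\alpha)>p_K(\beta)\geq m$ --- again a contradiction.

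The one delicate point is the choice of comparison element when $k\geq 1$: one must take $\beta$ with second coordinate $y_{\max}$ but the \emph{minimal} first coordinate (the ``$k=0$'' element, not the one sharing $\alpha$'s own value of $k$), because it is precisely the surplus $+k$ present in $\alpha'$ but absent from $\beta'$ that controls the otherwise unsigned quantity $\delta_y-\delta_{y_{\max}}$; a comparison with the same-$k$ element would fail. This is also why the case $k=0$ has to be peeled off and settled separately, there through Lemma~\ref{lemmaK}(ii) and the lower bound imposed on $y_{\max}$. None of the steps requires any nontrivial computation.
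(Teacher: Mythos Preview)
Your proof is correct and follows essentially the same three-case structure as the paper's: the case $k>k_{\max}$ via $\alpha\succ k_{\max}$, the case $k=0,\ y>y_{\max}$ via Lemma~\ref{lemmaK}(ii), and the case $k\geq 1,\ y>y_{\max}$ via the comparison $\alpha\succ\lceil y_{\max}\xi_D\rceil+y_{\max}\omega_D$. The only cosmetic differences are that you spell out the monotonicity $p_K(\beta)>p_K(\gamma)$ for $\beta\succ\gamma$ (the paper uses it tacitly) and you unroll the comparison $\alpha\succ\beta$ by hand rather than citing Lemma~\ref{lemmaK}(iii); your side remark that $y_{\max}\geq 1$ is true but not actually needed anywhere in the argument.
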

\begin{proof}
We show that if $ (k, y) $ is outside of the specified range, i.e., $ k > k_{\max} $ or $ y > y_{\max} $, then either $ \alpha/\alpha' > \varepsilon_+ $ or $ p_K(\alpha) > m $.

If $ k = 0 $ and $ y > y_{\max} $, then
\[
	y \geq y_{\max}+1 \geq \left\lfloor\frac{\varepsilon_+}{\xi_D+\omega_D}\right\rfloor+1 > \frac{\varepsilon_+}{\xi_D+\omega_D},
\]
hence $ \alpha/\alpha' > \varepsilon_+ $ by Lemma~\ref{lemmaK} (ii).

If $ k \geq 1 $ and $ y > y_{\max} $, then $ \alpha \succ \lceil y_{\max}\xi_D\rceil+y_{\max}\omega_D $ by Lemma~\ref{lemmaK} (iii), and hence
\[
	p_K(\alpha) > p_K\left(\lceil y_{\max}\xi_D\rceil+y_{\max}\omega_D\right) \geq m.
\]

If $ k > k_{\max} $, then $ \alpha \succ k_{\max} $, hence
\[
	p_K(\alpha) > p_K\left(k_{\max}\right) \geq m.
\]
This completes the proof.
\end{proof}

We performed the computation for $ m = 11 $ and the values of $ D $ not covered by Corollary~\ref{corP}, i.e.,
\[
	D \in \{2, 3, 5, 6, 7, 13, 17, 21\}.
\]
We chose $ k_{\max} = 6 $ since $ p_K(6) \geq p(6) = 11 $, and the values of $ y_{\max} $ were chosen according to Table~\ref{tabYmax}. The results are contained in Tables~\ref{tabD2}--\ref{tabD21} in the Appendix. Based on these tables, we identified all $ \alpha \in \O_K^+ $ with a given number of partitions in each of the quadratic fields, see Tables~\ref{tabD23mod4} and~\ref{tabD1mod4}.

To prove Theorem~\ref{thmD}, one simply examines these tables. For example, let us determine $ \D(m) $ for $ m = 3 $. By Corollary~\ref{corP}, $ 3 \in p_K(\O_K^+) $ for $ D \in \{2, 3\} $ and $ D>5 $. Table~\ref{tabD1mod4} shows that there is no element with $3$ partitions for $ D=5 $. Thus $ \D(3) = \{5\} $.

\begin{table}[ht]
\centering
\begin{TAB}{|l|r|r|r|r|}{|c|cccccccc|}
$D$&$\varepsilon_+$&$\left\lfloor\frac{\varepsilon_+}{\xi_D+\omega_D}\right\rfloor$&$y_{\max}$&$p_K\left(\lceil y_{\max}\xi_D \rceil+y_{\max}\omega_D\right)$\\
$2$&$3+2\sqrt{2}$&$2$&$15$&$16$\\
$3$&$2+\sqrt{3}$&$1$&$11$&$16$\\
$5$&$\frac{3+\sqrt{5}}{2}$&$1$&$13$&$18$\\
$6$&$5+2\sqrt{6}$&$2$&$9$&$12$\\
$7$&$8+3\sqrt{7}$&$3$&$11$&$16$\\
$13$&$\frac{11+3\sqrt{13}}{2}$&$3$&$10$&$12$\\
$17$&$33+8\sqrt{17}$&$16$&$18$&$23$\\
$21$&$\frac{5+\sqrt{21}}{2}$&$1$&$9$&$12$\\
\end{TAB}
\caption{The smallest totally positive unit $ \varepsilon_+ $ and $ y_{\max} $ for different values of $ D $.}
\label{tabYmax}
\end{table}

\section{Totally positive integers with four and six partitions}
\label{secVal2}

Next we treat the first two values not covered by the discussion is Section~\ref{secVal1}, namely $ m = 4 $ and $ 6 $.

\begin{theorem}
\label{thmM4}
Let $ K = \Q(\sqrt{D}) $ where $ D \in \Z_{\geq 2} $ is squarefree. If $ \alpha = (\lceil\xi_D\rceil+2)+\omega_D $, then $ p_K(\alpha) = 4 $.
\end{theorem}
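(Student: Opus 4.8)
The plan is to pin down exactly which totally positive integers $\beta$ satisfy $0\prec\beta\preceq\alpha$ — equivalently, which elements can occur as a part of a partition of $\alpha$ — and then to enumerate the partitions directly from this list. First I would record the size of $\alpha'$: since $D\geq2$ is squarefree, $\xi_D$ is irrational, so $0<\lceil\xi_D\rceil-\xi_D<1$, and writing $\alpha=x+\omega_D$ with $x=\lceil\xi_D\rceil+2$ we get $\alpha'=x-\xi_D\in(2,3)$. Hence every part $\beta$ of $\alpha$ satisfies $0<\beta'\leq\alpha'<3$.

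Next I would classify the admissible parts. Write $\beta=b+c\omega_D$ with $b,c\in\Z$, and put $\gamma:=\lceil\xi_D\rceil+\omega_D$; note $\gamma\succ0$ (its conjugate $\lceil\xi_D\rceil-\xi_D$ lies in $(0,1)$) and $\alpha-\gamma=2\succeq0$. If $c=0$ then $b=\beta'\leq\alpha'<3$, so $\beta\in\{1,2\}$. If $c=1$ then $\beta\succ0$ forces $b>\xi_D$, that is $b\geq\lceil\xi_D\rceil$, while $\alpha-\beta=x-b\succeq0$ forces $b\leq x$; thus $\beta\in\{\gamma,\gamma+1,\alpha\}$, and each of these is indeed $\preceq\alpha$. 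It remains to rule out $c\geq2$ and $c\leq-1$.

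For $c\leq-1$: total positivity of $\beta$ forces $b>|c|\omega_D>1$, hence $b\geq2$ and $\beta'=b+|c|\xi_D\geq2+\xi_D$; the constraint $\beta'\leq\alpha'=\lceil\xi_D\rceil+2-\xi_D$ then gives $2\xi_D\leq\lceil\xi_D\rceil$, which is impossible because $\lceil\xi_D\rceil<\xi_D+1\leq2\xi_D$ whenever $\xi_D\geq1$, and because $2\xi_5=\sqrt5-1>1=\lceil\xi_5\rceil$ in the only remaining case $D=5$. For $c\geq2$: from $\alpha\geq\beta$ (under the embedding $K\subset\R$) we get $b\leq x-(c-1)\omega_D\leq x-\omega_D$, and a short computation shows $x-\omega_D<3$ when $D\equiv2,3\pmod4$ and $x-\omega_D<2$ when $D\equiv1\pmod4$; in both cases $\beta'=b-c\xi_D\leq b-2\xi_D<0$, contradicting $\beta\succ0$. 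Therefore the parts of $\alpha$ are precisely $1$, $2$, $\gamma$, $\gamma+1$, and $\alpha$.

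Finally I would count. Of these five parts, only $\gamma$, $\gamma+1$, and $\alpha$ have a nonzero $\omega_D$-coefficient, and each of them has $\omega_D$-coefficient $1$; since the $\omega_D$-coefficients of the parts of any partition of $\alpha$ sum to $1$ (that being the $\omega_D$-coefficient of $\alpha$), exactly one of $\gamma$, $\gamma+1$, $\alpha$ appears, exactly once, and all other parts lie in $\{1,2\}$. Deleting $\alpha$, $\gamma+1$, or $\gamma$ leaves the rational integer $0$, $1$, or $2$ to be partitioned into parts from $\{1,2\}$, which is possible in $p(0)=1$, $p(1)=1$, and $p(2)=2$ ways. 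Hence $p_K(\alpha)=1+1+2=4$. The one delicate point is the exclusion of $|c|\geq2$: the governing inequalities are essentially tight for the smallest fields $\Q(\sqrt2)$ and $\Q(\sqrt5)$, so a little care is needed there, but the rest is routine arithmetic in the integral basis.
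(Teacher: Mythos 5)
Your proof is correct and follows essentially the same route as the paper's: both arguments classify the totally positive $\beta\preceq\alpha$ by their $\omega_D$-coefficient, rule out coefficient $\geq 2$ by a size estimate tight only for small $D$, and reduce the count to partitions of the leftover rational integer $0$, $1$, or $2$. The only cosmetic difference is that you exclude negative $\omega_D$-coefficients explicitly, whereas the paper absorbs that step into the previously established values $p_K(1)=1$ and $p_K(2)=2$.
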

\begin{proof}
First, $ p_K(\alpha) \geq 4 $ because $ \alpha $ has the four partitions
\[
	\lceil\xi_D\rceil+2+\omega_D = \left(\lceil\xi_D\rceil+1+\omega_D\right)+1 = (\lceil\xi_D\rceil+\omega_D)+2 =  (\lceil\xi_D\rceil+\omega_D)+1+1.
\]

It remains to show $ p_K(\alpha) \leq 4 $. Consider an arbitrary partition
\[
	\alpha = \alpha_1+\alpha_2+\dots+\alpha_j,\qquad \alpha_i \in \O_K^+,
\]
and let $ \alpha_i = x_i+y_i\omega_D $. Since $ \alpha $ expressed in the basis $ (1, \omega_D) $ has the second coefficient equal to $ 1 $, at least one of the coefficients $ y_i $ must be positive. Suppose that $ y_1 \geq 1 $, so that $ \alpha_1 $ can be written as $ \alpha_1 = \left(\lceil y_1\xi_D\rceil+k_1\right)+y_1\omega_D $ for some $ k_1 \in \Z_{\geq 0} $. By Lemma~\ref{lemmaK} (iv), the only options for $ k_1 $ are $ k_1 \in \{0, 1, 2\} $.

If $ y_1 \geq 2 $, then $ \alpha_1 = \left(\lceil y_1\xi_D \rceil+k_1\right)+y_1\omega_D \geq \lceil 2\xi_D \rceil+2\omega_D $, and
\[
	\alpha-\left(\lceil 2\xi_D \rceil+2\omega_D\right) = \left(\lceil\xi_D\rceil-\lceil 2\xi_D \rceil+2\right)-\omega_D.
\]

For $ D \notin\{2, 3, 5\} $, this can be estimated as
\[
	\left(\lceil\xi_D\rceil-\lceil 2\xi_D \rceil+2\right)-\omega_D \leq 2-\omega_D < 0.
\]

For $ D \in \{2, 3, 5\} $, we get
\begin{align*}
	\left(\lceil \xi_2 \rceil-\lceil 2\xi_2 \rceil+2\right)-\omega_2 = \left(\left\lceil \sqrt{2}\right\rceil-\left\lceil 2\sqrt{2} \right\rceil+2\right)-\sqrt{2} = 1-\sqrt{2} < 0,\\
	\left(\lceil \xi_3 \rceil-\lceil 2\xi_3 \rceil+2\right)-\omega_3 = \left(\left\lceil \sqrt{3}\right\rceil-\left\lceil 2\sqrt{3} \right\rceil+2\right)-\sqrt{3} = -\sqrt{3} < 0,\\
	\left(\lceil \xi_5 \rceil-\lceil 2\xi_5 \rceil+2\right)-\omega_5 = \left(\left\lceil \frac{\sqrt{5}-1}{2}\right\rceil-\left\lceil \sqrt{5}-1 \right\rceil+2\right)-\frac{1+\sqrt{5}}{2} = \frac{1-\sqrt{5}}{2} < 0.
\end{align*}

We see that in either case, $ \alpha < \lceil 2\xi_D \rceil+2\omega_D $, hence also $ \alpha < \alpha_1 $.

The only remaining possibilities are $ y_1 = 1 $ and $ k_1 \in \{0, 1, 2\} $.
\begin{itemize}
\item If $ k_1 = 0 $, then $ \alpha_1 = \lceil \xi_D \rceil+\omega_D $, hence $ \alpha-\alpha_1 = 2 $. This leads to two partitions of $ \alpha $, namely $ \left( \lceil \xi_D \rceil+\omega_D\right)+2 $ and $ \left( \lceil \xi_D \rceil+\omega_D\right)+1+1 $.
\item If $ k_1 = 1 $, then $ \alpha_1 = \left(\lceil \xi_D \rceil+1\right)+\omega_D $, leading to the partition $ \left( \lceil \xi_D \rceil+1+\omega_D\right)+1 $.
\item If $ k_1 = 2 $, then $ \alpha_1 = \alpha $, and we get the trivial partition.
\end{itemize}
\end{proof}

\begin{theorem}
\label{thmM6}
Let $ K = \Q(\sqrt{D}) $ where $ D \in \Z_{\geq 2} $ is squarefree, $ D \neq 5 $ and let $ \alpha = \left(\lceil 2\xi_D \rceil+2\right)+2\omega_D $.
\begin{enumerate}[i)]
\item If $ \lceil\xi_D\rceil-\xi_D > \frac{1}{2} $, then $ p_K(\alpha)=6 $.
\item If $ \lceil\xi_D\rceil-\xi_D < \frac{1}{2} $, then $ p_K(\alpha)=9 $.
\end{enumerate}
\end{theorem}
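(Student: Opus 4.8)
The plan is to adapt the enumeration strategy from the proof of Theorem~\ref{thmM4}. Write an arbitrary partition as $\alpha = \alpha_1+\dots+\alpha_j$ with $\alpha_i = x_i+y_i\omega_D \in \O_K^+$; comparing $\omega_D$-coefficients on both sides gives $y_1+\dots+y_j = 2$. The decisive claim is that every $y_i$ lies in $\{0,1,2\}$. Granting this, $\sum y_i = 2$ forces exactly one of two shapes: either a single part has $y_i = 2$ and all others are positive rational integers, or exactly two parts have $y_i = 1$ and all others are positive rational integers. We then count the partitions of each shape.

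First I would prove the claim $y_i\in\{0,1,2\}$ by contradiction. If $y_i \geq 3$, then $\alpha_i \neq \alpha$, so $\alpha-\alpha_i \succ 0$; since its $\omega_D$-coefficient $2-y_i$ is negative, total positivity gives $x_i < (\lceil 2\xi_D\rceil+2)-(y_i-2)\omega_D$, while total positivity of $\alpha_i$ through its conjugate gives $x_i > y_i\xi_D$. Combining these and using $\lceil 2\xi_D\rceil < 2\xi_D+1$ yields $(y_i-2)(\xi_D+\omega_D) < 3$, hence $\xi_D+\omega_D < 3$. A parallel computation rules out $y_i \leq -1$: here necessarily $j\geq 2$, so $\alpha-\alpha_i\succ 0$ with positive $\omega_D$-coefficient $2-y_i$, and comparing the conjugate inequality for $\alpha-\alpha_i$ with positivity of $\alpha_i$ again forces $\xi_D+\omega_D < 3$. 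Since $\xi_D+\omega_D$ equals $2\sqrt D$ when $D\equiv 2,3\pmod 4$ and $\sqrt D$ when $D\equiv 1\pmod 4$, one has $\xi_D+\omega_D\geq 3$ for every squarefree $D\geq 2$ except $D = 2$ and $D = 5$. The case $D = 5$ is excluded by hypothesis, and genuinely so: for $D = 5$ we have $\alpha = 4+2\omega_5$ and the additional partition $\alpha = (2+3\omega_5)+(2-\omega_5)$, with both summands totally positive, already gives $p_K(\alpha) \geq 10$. For $D = 2$, where $\alpha = 5+2\sqrt 2$, the claim is checked by a direct finite inspection, or one simply reads off $p_K(5+2\sqrt2) = 6$ from Figure~\ref{figPK}.

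With the claim in hand, the enumeration is routine. In the first shape the part with $y_i = 2$ has the form $(\lceil 2\xi_D\rceil+k)+2\omega_D$ with $k\in\Z_{\geq 0}$; since this part is $\preceq\alpha$, Lemma~\ref{lemmaK}(iv) forces $k\leq 2$, and the remaining parts are positive rational integers summing to $2-k$, so this shape contributes $p(0)+p(1)+p(2) = 4$ partitions for every $D$. In the second shape the two parts with $y_i = 1$ have the form $(\lceil\xi_D\rceil+k_i)+\omega_D$ with $0\leq k_1\leq k_2$ and, again by Lemma~\ref{lemmaK}(iv), $k_2\leq 2$; the remaining parts are positive rational integers summing to $R := (\lceil 2\xi_D\rceil-2\lceil\xi_D\rceil)+2-k_1-k_2$, so the number of completions is $p(R)$ when $R\geq 0$. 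Since $\xi_D$ is irrational, $\lceil\xi_D\rceil-\xi_D = 1-\{\xi_D\}$, and $\lceil 2\xi_D\rceil-2\lceil\xi_D\rceil$ equals $-1$ when $\lceil\xi_D\rceil-\xi_D > \tfrac12$ (i.e.\ $\{\xi_D\}<\tfrac12$) and $0$ when $\lceil\xi_D\rceil-\xi_D < \tfrac12$. In case (i) the admissible pairs $(k_1,k_2)$ are $(0,0)$ and $(0,1)$, contributing $p(1)+p(0) = 2$ partitions, for a total $p_K(\alpha) = 4+2 = 6$. In case (ii) they are $(0,0),(0,1),(0,2),(1,1)$, contributing $p(2)+p(1)+p(0)+p(0) = 5$, for a total $p_K(\alpha) = 4+5 = 9$.

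The main obstacle is the claim $y_i\in\{0,1,2\}$: this is where the hypothesis $D\neq 5$ is used, where the inequalities must be manipulated with care (keeping careful track of $\lceil 2\xi_D\rceil$ versus $2\xi_D$), and where the borderline field $\Q(\sqrt 2)$ must be disposed of separately. Everything afterwards is a short explicit case check: list the partitions of each shape, verify each listed expression is a genuine partition of $\alpha$ into totally positive parts, and confirm the list is exhaustive.
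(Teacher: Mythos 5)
Your proof is correct, and while it follows the same broad strategy as the paper's (bound the $\omega_D$-coordinates of the parts, use Lemma~\ref{lemmaK}(iv) to cap the $k$'s, then enumerate), the execution differs in two ways that are worth noting. First, the paper constrains only one distinguished part $\alpha_1$ with $y_1\geq 1$, works out the case $(k_1,y_1)=(2,1)$ in detail, and then asserts that the remaining pairs $(k_1,y_1)$ ``lead to the partitions above''; you instead prove the global claim that \emph{every} part has $y_i\in\{0,1,2\}$ (explicitly ruling out negative $y_i$, which the paper leaves implicit), so that $\sum y_i=2$ splits all partitions into exactly two shapes and exhaustiveness becomes transparent. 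Your completion count $p(R)$ with $R=\left(\lceil 2\xi_D\rceil-2\lceil\xi_D\rceil\right)+2-k_1-k_2$, combined with the dichotomy $\lceil 2\xi_D\rceil=2\lceil\xi_D\rceil-1$ or $2\lceil\xi_D\rceil$, then cleanly yields $4+2=6$ and $4+5=9$, matching the paper's explicit lists. Second, your exclusion of $y_i\geq 3$ runs through the uniform inequality $(y_i-2)(\xi_D+\omega_D)<3$ rather than the paper's case-by-case ceiling evaluations of $\lceil 2\xi_D\rceil-\lceil 3\xi_D\rceil+2-\omega_D$; this is slicker and leaves only the single exceptional field $D=2$ (where $\xi_D+\omega_D=2\sqrt{2}<3$), which you legitimately dispose of by direct inspection, whereas the paper must compute separately for both $D=2$ and $D=3$. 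Your observation that $\alpha_i\neq\alpha$ forces $j\geq 2$ and hence $\alpha-\alpha_i\succ 0$ correctly supplies the total positivity needed in both exclusion arguments. What your systematization buys is a verifiably complete enumeration; what the paper's version buys is a shorter write-up that displays each of the $6$ (resp.\ $9$) partitions explicitly.
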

\begin{proof}
We note that
\[
	\lceil 2\xi_D \rceil = \begin{cases}
		2\lceil \xi_D \rceil-1,& \text{if }\lceil\xi_D\rceil-\xi_D > \frac{1}{2},\\
		2\lceil \xi_D \rceil,& \text{if }\lceil\xi_D\rceil-\xi_D < \frac{1}{2}.
	\end{cases}
\]

If $ \lceil\xi_D\rceil-\xi_D > \frac{1}{2} $, then $ \alpha = (\lceil 2\xi_D\rceil+2)+2\omega_D = 2\lceil \xi_D\rceil+1+2\omega_D $, and the $ 6 $ partitions of $ \alpha $ are
\begin{align*}
	\lceil 2\xi_D\rceil+2+2\omega_D& = \left(\lceil 2\xi_D\rceil+1+2\omega_D\right)+1 = \left(\lceil 2\xi_D\rceil+2\omega_D\right)+2\\
	& = \left(\lceil 2\xi_D\rceil+2\omega_D\right)+1+1 = \left(\lceil\xi_D\rceil+1+\omega_D\right)+\left(\lceil\xi_D\rceil+\omega_D\right)\\
	& = \left(\lceil\xi_D\rceil+\omega_D\right)+\left(\lceil\xi_D\rceil+\omega_D\right)+1.
\end{align*}

If $ \lceil\xi_D\rceil-\xi_D < \frac{1}{2} $, then $ \alpha = \left(\lceil 2\xi_D\rceil+2\right)+2\omega_D = 2\left(\lceil \xi_D \rceil+1+\omega_D\right) $, and the $ 9 $ partitions of $ \alpha $ are
\begin{align*}
	\lceil 2\xi_D\rceil+2+2\omega_D& = \left(\lceil 2\xi_D\rceil+1+2\omega_D\right)+1 = \left(\lceil 2\xi_D\rceil+2\omega_D\right)+2\\
	& = \left(\lceil 2\xi_D\rceil+2\omega_D\right)+1+1 = \left(\lceil\xi_D\rceil+2+\omega_D\right)+\left(\lceil\xi_D\rceil+\omega_D\right)\\
	& = \left(\lceil\xi_D\rceil+1+\omega_D\right)+\left(\lceil\xi_D\rceil+1+\omega_D\right) = \left(\lceil\xi_D\rceil+1+\omega_D\right)+\left(\lceil\xi_D\rceil+\omega_D\right)+1\\
	& = \left(\lceil\xi_D\rceil+\omega_D\right)+\left(\lceil\xi_D\rceil+\omega_D\right)+2 = \left(\lceil\xi_D\rceil+\omega_D\right)+\left(\lceil\xi_D\rceil+\omega_D\right)+1+1.
\end{align*}

To show that there do not exist any other partitions besides those listed, let
\[
	\alpha = \alpha_1+\alpha_2+\dots+\alpha_j,\qquad \alpha_i \in \O_K^+,
\]
and let $ \alpha_i = x_i+y_i\omega_D $. In the expression of $ \alpha $ in the integral basis $ (1, \omega_D) $, the second coefficient is equal to $ 2 $, which means that at least one of the coefficients $ y_i $ must be positive. We assume $ y_1 \geq 1 $ and write $ \alpha_1 $ as $ \alpha_1 = \left(\lceil y_1\xi_D \rceil+k_1\right)+y_1\omega_D $ for some $ k_1 \in \Z_{\geq 0} $. By Lemma~\ref{lemmaK} (iv), the only options for $ k_1 $ are $ k_1 \in \{0, 1, 2\} $.

If $ y_1 \geq 3 $, then $ \alpha_1 = \left(\lceil y_1\xi_D \rceil+k_1\right)+y_1\omega_D \geq \lceil 3\xi_D \rceil+3\omega_D $, and
\[
	\alpha-\left(\lceil 3\xi_D \rceil+3\omega_D\right) = \left(\lceil 2\xi_D \rceil-\lceil 3\xi_D \rceil+2\right)-\omega_D.
\]
For $ D \notin \{2, 3, 5\} $, this can be estimated as
\[
	 \left(\lceil 2\xi_D \rceil-\lceil 3\xi_D \rceil+2\right)-\omega_D \leq 2-\omega_D < 0.
\]
For $ D \in \{2, 3\} $ (the value $ D = 5 $ being excluded by assumption), we get
\begin{align*}
	\left(\lceil 2\xi_2 \rceil-\lceil 3\xi_2 \rceil+2\right)-\omega_2& = \left(\left\lceil 2\sqrt{2} \right\rceil-\left\lceil 3\sqrt{2} \right\rceil+2\right)-\sqrt{2} = -\sqrt{2} < 0,\\
	\left(\lceil 2\xi_3 \rceil-\lceil 3\xi_3 \rceil+2\right)-\omega_3& = \left(\left\lceil 2\sqrt{3} \right\rceil-\left\lceil 3\sqrt{3} \right\rceil+2\right)-\sqrt{3} = -\sqrt{3} < 0.
\end{align*}
In either case, $ \alpha < \lceil 3\xi_D\rceil+3\omega_D $, hence also $ \alpha < \alpha_1 $. This leaves the values $ y_1 \in \{1, 2\} $.

If $ k_1 = 2 $ and $ y_1 = 1 $, so that $ \alpha_1 = \left(\lceil\xi_D\rceil+2\right)+\omega_D $, we distinguish two cases. If $ \lceil \xi_D \rceil-\xi_D > \frac{1}{2} $, then
\[
	\alpha-\alpha_1 = \left(2\lceil\xi_D\rceil+1+2\omega_D\right)-\left(\lceil\xi_D\rceil+2+\omega_D\right) = \left(\lceil\xi_D\rceil-1\right)+\omega_D,
\]
which is not totally positive. If $ \lceil \xi_D \rceil-\xi_D < \frac{1}{2} $, then
\[
	\alpha-\alpha_1 = \left(2\lceil\xi_D\rceil+2+2\omega_D\right)-\left(\lceil\xi_D\rceil+2+\omega_D\right) = \lceil\xi_D\rceil+\omega_D,
\]
and we obtain the partition $ \alpha = \left(\lceil\xi_D\rceil+2+\omega_D\right)+\left(\lceil\xi_D\rceil+\omega_D\right) $.

The remaining possibilities are $ (k_1, y_1) \in \{(0, 1), (1, 1), (0, 2), (1, 2), (2, 2)\} $, leading to the partitions above.
\end{proof}

Theorem~\ref{thm46} now follows from Theorems~\ref{thmM4} and~\ref{thmM6}.

\begin{remark}
If $ D = 5 $, then $ \alpha = \left(\lceil 2\xi_5 \rceil+2\right)+2\omega_5 = 4+2\cdot\frac{1+\sqrt{5}}{2} $. According to Table~\ref{tabD5}, we have $ p_K(\alpha) = 10 $. The additional partition not listed in the preceding proof is
\[
	\alpha = \left(2+3\cdot\frac{1+\sqrt{5}}{2}\right)+\left(2-\frac{1+\sqrt{5}}{2}\right).
\]
\end{remark}

We mentioned in the introduction that the condition $ \lceil\xi_D\rceil-\xi_D > \frac{1}{2} $ is sufficient but not necessary for $ 6 $ to be contained in the range of $ p_K $, as showed by $ p_K(4) = 6 $ in $ K = \Q(\sqrt{3}) $. The next counterexample is the following one.

\begin{example}
\label{example6}
Let $ D = 14 $. We have $ \lceil\xi_D\rceil-\xi_D = \lceil\sqrt{14}\rceil-\sqrt{14} \leq \frac{1}{2} $ and the element $ \alpha = 16+4\sqrt{14} = \left(\lceil 4\sqrt{14}\rceil+1\right)+4\sqrt{14} $ has $ 6 $ partitions:
\begin{align*}
	16+4\sqrt{14}& = \left(15+4\sqrt{14}\right)+1 = \left(12+3\sqrt{14}\right)+\left(4+\sqrt{14}\right) = \left(8+2\sqrt{14}\right)+\left(8+2\sqrt{14}\right)\\
	& = \left(8+2\sqrt{14}\right)+\left(4+\sqrt{14}\right)+\left(4+\sqrt{14}\right)\\
	& = \left(4+\sqrt{14}\right)+\left(4+\sqrt{14}\right)+\left(4+\sqrt{14}\right)+\left(4+\sqrt{14}\right).
\end{align*}
\end{example}

\addresseshere

\appendix

\newpage
\section{Tables}

\begin{table}[ht]
	\begin{tabular}{|l||r|r|r|r|r|r|r|}
	\hline
	&\multicolumn{7}{c|}{$k$}\\
	\hline
	$y$&$0$&$1$&$2$&$3$&$4$&$5$&$6$\\
	\hline\hline
	$0$&$1$&$1$&$2$&$3$&$6$&$10$&$19$\\
	\hline
	$1$&$1$&$2$&$4$&$8$&$16$&$29$&$54$\\
	\hline
	$2$&$1$&$3$&$6$&$12$&$23$&$44$&$81$\\
	\hline
	$3$&$2$&$6$&$13$&$28$&$56$&$107$&$199$\\
	\hline
	$4$&$2$&$6$&$16$&$33$&$69$&$134$&$257$\\
	\hline
	$5$&$4$&$13$&$33$&$73$&$153$&$301$&$577$\\
	\hline
	$6$&$3$&$12$&$33$&$79$&$169$&$346$&$676$\\
	\hline
	$7$&$1$&$8$&$28$&$73$&$172$&$368$&$748$\\
	\hline
	$8$&$6$&$23$&$69$&$169$&$383$&$801$&$1610$\\
	\hline
	$9$&$2$&$16$&$56$&$153$&$368$&$816$&$1692$\\
	\hline
	$10$&$10$&$44$&$134$&$346$&$801$&$1732$&$3544$\\
	\hline
	$11$&$4$&$29$&$107$&$301$&$748$&$1692$&$3595$\\
	\hline
	$12$&$1$&$19$&$81$&$257$&$676$&$1610$&$3544$\\
	\hline
	$13$&$8$&$54$&$199$&$577$&$1458$&$3369$&$7276$\\
	\hline
	$14$&$3$&$34$&$149$&$475$&$1285$&$3109$&$6981$\\
	\hline
	$15$&$16$&$98$&$365$&$1071$&$2760$&$6471$&$14201$\\
	\hline
	\end{tabular}
\caption{Number of partitions of $ \alpha = (\lceil y\sqrt{2}\rceil+k)+y\sqrt{2} $ in $ K = \Q(\sqrt{2}) $.}
\label{tabD2}
\end{table}

\begin{table}[ht]
	\begin{tabular}{|l||r|r|r|r|r|r|r|}
	\hline
	&\multicolumn{7}{c|}{$k$}\\
	\hline
	$y$&$0$&$1$&$2$&$3$&$4$&$5$&$6$\\
	\hline\hline
	$0$&$1$&$1$&$2$&$3$&$6$&$10$&$18$\\
	\hline
	$1$&$1$&$2$&$4$&$7$&$14$&$25$&$45$\\
	\hline
	$2$&$2$&$4$&$9$&$16$&$32$&$57$&$103$\\
	\hline
	$3$&$3$&$7$&$16$&$32$&$64$&$118$&$215$\\
	\hline
	$4$&$1$&$6$&$14$&$32$&$64$&$128$&$237$\\
	\hline
	$5$&$2$&$10$&$25$&$57$&$118$&$237$&$447$\\
	\hline
	$6$&$4$&$18$&$45$&$103$&$215$&$432$&$819$\\
	\hline
	$7$&$7$&$29$&$76$&$177$&$376$&$760$&$1456$\\
	\hline
	$8$&$2$&$14$&$52$&$133$&$309$&$656$&$1328$\\
	\hline
	$9$&$4$&$25$&$87$&$224$&$521$&$1115$&$2262$\\
	\hline
	$10$&$9$&$45$&$149$&$378$&$878$&$1876$&$3811$\\
	\hline
	$11$&$16$&$76$&$244$&$624$&$1448$&$3105$&$6317$\\
	\hline
	\end{tabular}
\caption{Number of partitions of $ \alpha = (\lceil y\sqrt{3}\rceil+k)+y\sqrt{3} $ in $ K = \Q(\sqrt{3}) $.}
\label{tabD3}
\end{table}

\begin{table}[ht]
	\begin{tabular}{|l||r|r|r|r|r|r|r|}
	\hline
	&\multicolumn{7}{c|}{$k$}\\
	\hline
	$y$&$0$&$1$&$2$&$3$&$4$&$5$&$6$\\
	\hline\hline
	$0$&$1$&$1$&$2$&$4$&$8$&$14$&$29$\\
	\hline
	$1$&$1$&$2$&$4$&$9$&$18$&$36$&$71$\\
	\hline
	$2$&$2$&$4$&$10$&$21$&$43$&$84$&$166$\\
	\hline
	$3$&$1$&$4$&$9$&$21$&$46$&$92$&$183$\\
	\hline
	$4$&$2$&$8$&$18$&$43$&$92$&$191$&$377$\\
	\hline
	$5$&$4$&$14$&$36$&$84$&$183$&$377$&$753$\\
	\hline
	$6$&$2$&$9$&$29$&$71$&$166$&$356$&$737$\\
	\hline
	$7$&$4$&$18$&$54$&$136$&$313$&$678$&$1396$\\
	\hline
	$8$&$1$&$10$&$36$&$106$&$259$&$592$&$1269$\\
	\hline
	$9$&$4$&$21$&$71$&$198$&$484$&$1093$&$2341$\\
	\hline
	$10$&$9$&$43$&$136$&$371$&$890$&$2003$&$4257$\\
	\hline
	$11$&$2$&$21$&$84$&$259$&$683$&$1623$&$3613$\\
	\hline
	$12$&$8$&$46$&$166$&$484$&$1250$&$2926$&$6467$\\
	\hline
	$13$&$18$&$92$&$313$&$890$&$2246$&$5217$&$11429$\\
	\hline
	\end{tabular}
\caption{Number of partitions of $ \alpha = (\lceil y\omega_5\rceil+k)+y\omega_5 $ in $ K = \Q(\sqrt{5}) $.}
\label{tabD5}
\end{table}

\begin{table}[ht]
	\begin{tabular}{|l||r|r|r|r|r|r|r|}
	\hline
	&\multicolumn{7}{c|}{$k$}\\
	\hline
	$y$&$0$&$1$&$2$&$3$&$4$&$5$&$6$\\
	\hline\hline
	$0$&$1$&$1$&$2$&$3$&$5$&$7$&$12$\\
	\hline
	$1$&$1$&$2$&$4$&$7$&$12$&$20$&$34$\\
	\hline
	$2$&$1$&$3$&$6$&$12$&$21$&$36$&$60$\\
	\hline
	$3$&$2$&$6$&$13$&$26$&$48$&$85$&$146$\\
	\hline
	$4$&$2$&$6$&$16$&$33$&$65$&$117$&$208$\\
	\hline
	$5$&$4$&$13$&$33$&$70$&$138$&$255$&$456$\\
	\hline
	$6$&$3$&$12$&$33$&$78$&$160$&$309$&$567$\\
	\hline
	$7$&$7$&$26$&$70$&$161$&$332$&$642$&$1184$\\
	\hline
	$8$&$5$&$21$&$65$&$160$&$353$&$708$&$1355$\\
	\hline
	$9$&$12$&$48$&$138$&$332$&$719$&$1438$&$2738$\\
	\hline
	\end{tabular}
\caption{Number of partitions of $ \alpha = (\lceil y\sqrt{6}\rceil+k)+y\sqrt{6} $ in $ K = \Q(\sqrt{6}) $.}
\label{tabD6}
\end{table}

\begin{table}[ht]
	\begin{tabular}{|l||r|r|r|r|r|r|r|}
	\hline
	&\multicolumn{7}{c|}{$k$}\\
	\hline
	$y$&$0$&$1$&$2$&$3$&$4$&$5$&$6$\\
	\hline\hline
	$0$&$1$&$1$&$2$&$3$&$5$&$7$&$12$\\
	\hline
	$1$&$1$&$2$&$4$&$7$&$12$&$19$&$32$\\
	\hline
	$2$&$2$&$4$&$9$&$16$&$29$&$48$&$82$\\
	\hline
	$3$&$1$&$4$&$9$&$19$&$36$&$64$&$110$\\
	\hline
	$4$&$2$&$8$&$18$&$39$&$74$&$135$&$234$\\
	\hline
	$5$&$4$&$14$&$34$&$73$&$143$&$264$&$468$\\
	\hline
	$6$&$2$&$9$&$29$&$67$&$144$&$279$&$519$\\
	\hline
	$7$&$4$&$18$&$53$&$125$&$266$&$521$&$972$\\
	\hline
	$8$&$9$&$34$&$99$&$229$&$489$&$958$&$1798$\\
	\hline
	$9$&$3$&$19$&$67$&$182$&$420$&$884$&$1738$\\
	\hline
	$10$&$7$&$39$&$125$&$332$&$754$&$1582$&$3101$\\
	\hline
	$11$&$16$&$73$&$229$&$588$&$1332$&$2777$&$5452$\\
	\hline
	\end{tabular}
\caption{Number of partitions of $ \alpha = (\lceil y\sqrt{7}\rceil+k)+y\sqrt{7} $ in $ K = \Q(\sqrt{7}) $.}
\label{tabD7}
\end{table}

\begin{table}[ht]
	\begin{tabular}{|l||r|r|r|r|r|r|r|}
	\hline
	&\multicolumn{7}{c|}{$k$}\\
	\hline
	$y$&$0$&$1$&$2$&$3$&$4$&$5$&$6$\\
	\hline\hline
	$0$&$1$&$1$&$2$&$3$&$5$&$8$&$14$\\
	\hline
	$1$&$1$&$2$&$4$&$7$&$13$&$23$&$40$\\
	\hline
	$2$&$1$&$3$&$6$&$12$&$22$&$40$&$70$\\
	\hline
	$3$&$1$&$3$&$8$&$16$&$31$&$58$&$105$\\
	\hline
	$4$&$3$&$8$&$20$&$40$&$79$&$146$&$265$\\
	\hline
	$5$&$2$&$8$&$21$&$48$&$98$&$191$&$355$\\
	\hline
	$6$&$2$&$8$&$24$&$56$&$121$&$240$&$460$\\
	\hline
	$7$&$6$&$21$&$58$&$132$&$280$&$554$&$1052$\\
	\hline
	$8$&$4$&$20$&$58$&$145$&$318$&$656$&$1275$\\
	\hline
	$9$&$3$&$16$&$56$&$148$&$345$&$736$&$1485$\\
	\hline
	$10$&$12$&$48$&$145$&$357$&$803$&$1669$&$3306$\\
	\hline
	\end{tabular}
\caption{Number of partitions of $ \alpha = (\lceil y\omega_{13}\rceil+k)+y\omega_{13} $ in $ K = \Q(\sqrt{13}) $.}
\label{tabD13}
\end{table}

\begin{table}[ht]
	\begin{tabular}{|l||r|r|r|r|r|r|r|}
	\hline
	&\multicolumn{7}{c|}{$k$}\\
	\hline
	$y$&$0$&$1$&$2$&$3$&$4$&$5$&$6$\\
	\hline\hline
	$0$&$1$&$1$&$2$&$3$&$5$&$8$&$14$\\
	\hline
	$1$&$1$&$2$&$4$&$7$&$12$&$21$&$36$\\
	\hline
	$2$&$2$&$4$&$9$&$16$&$30$&$52$&$91$\\
	\hline
	$3$&$1$&$4$&$9$&$19$&$36$&$66$&$118$\\
	\hline
	$4$&$2$&$8$&$18$&$39$&$75$&$141$&$252$\\
	\hline
	$5$&$1$&$5$&$16$&$37$&$78$&$153$&$287$\\
	\hline
	$6$&$3$&$11$&$33$&$74$&$157$&$306$&$577$\\
	\hline
	$7$&$1$&$7$&$24$&$65$&$146$&$305$&$598$\\
	\hline
	$8$&$3$&$16$&$49$&$128$&$282$&$587$&$1145$\\
	\hline
	$9$&$8$&$33$&$98$&$244$&$538$&$1107$&$2160$\\
	\hline
	$10$&$3$&$19$&$70$&$193$&$467$&$1012$&$2071$\\
	\hline
	$11$&$8$&$42$&$139$&$371$&$873$&$1879$&$3810$\\
	\hline
	$12$&$2$&$22$&$90$&$277$&$706$&$1629$&$3456$\\
	\hline
	$13$&$8$&$50$&$185$&$533$&$1324$&$2989$&$6286$\\
	\hline
	$14$&$2$&$23$&$112$&$371$&$1019$&$2455$&$5453$\\
	\hline
	$15$&$8$&$58$&$235$&$728$&$1911$&$4504$&$9834$\\
	\hline
	$16$&$1$&$25$&$132$&$482$&$1398$&$3551$&$8178$\\
	\hline
	$17$&$7$&$64$&$287$&$953$&$2641$&$6501$&$14704$\\
	\hline
	$18$&$23$&$152$&$595$&$1850$&$4915$&$11797$&$26201$\\
	\hline
	\end{tabular}
\caption{Number of partitions of $ \alpha = (\lceil y\omega_{17}\rceil+k)+y\omega_{17} $ in $ K = \Q(\sqrt{17}) $.}
\label{tabD17}
\end{table}

\begin{table}[ht]
	\begin{tabular}{|l||r|r|r|r|r|r|r|}
	\hline
	&\multicolumn{7}{c|}{$k$}\\
	\hline
	$y$&$0$&$1$&$2$&$3$&$4$&$5$&$6$\\
	\hline\hline
	$0$&$1$&$1$&$2$&$3$&$5$&$8$&$14$\\
	\hline
	$1$&$1$&$2$&$4$&$7$&$12$&$21$&$36$\\
	\hline
	$2$&$2$&$4$&$9$&$16$&$29$&$50$&$87$\\
	\hline
	$3$&$3$&$7$&$16$&$31$&$57$&$102$&$179$\\
	\hline
	$4$&$5$&$12$&$29$&$57$&$110$&$198$&$353$\\
	\hline
	$5$&$1$&$8$&$21$&$50$&$102$&$198$&$366$\\
	\hline
	$6$&$2$&$14$&$36$&$87$&$179$&$353$&$656$\\
	\hline
	$7$&$4$&$22$&$60$&$144$&$303$&$602$&$1136$\\
	\hline
	$8$&$7$&$36$&$98$&$238$&$504$&$1013$&$1924$\\
	\hline
	$9$&$12$&$56$&$157$&$381$&$822$&$1661$&$3189$\\
	\hline
\end{tabular}
\caption{Number of partitions of $ \alpha = (\lceil y\omega_{21}\rceil+k)+y\omega_{21} $ in $ K = \Q(\sqrt{21}) $.}
\label{tabD21}
\end{table}

\def\arraystretch{1.3}

\begin{table}[ht]
	\begin{tabular}{|l|r|r|r|r|}
	\hline
	\multicolumn{5}{|c|}{$ \alpha\in\O_K^+:\; p_K(\alpha) = m $}\\
	\hline
	&\multicolumn{4}{|c|}{$D$}\\
	\hline
	$m$&$2$&$3$&$6$&$7$\\
	\hline\hline
	$1$&$1$, $2+\sqrt{2}$&$1$&$1$, $3+\sqrt{6}$&$1$, $3+\sqrt{7}$\\
	\hline
	$2$&$2$, $3+\sqrt{2}$&$2$, $3+\sqrt{3}$&$2$, $4+\sqrt{6}$&$2$, $4+\sqrt{7}$, $6+2\sqrt{7}$\\
	\hline
	$3$&$3$, $4+2\sqrt{2}$&$3$&$3$, $6+2\sqrt{6}$&$3$\\
	\hline
	$4$&$4+\sqrt{2}$&$4+\sqrt{3}$&$5+\sqrt{6}$&$5+\sqrt{7}$, $7+2\sqrt{7}$, $9+3\sqrt{7}$\\
	\hline
	$5$&--&--&$4$&$4$\\
	\hline
	$6$&$4$, $5+2\sqrt{2}$, $6+3\sqrt{2}$&$4$&$7+2\sqrt{6}$, $9+3\sqrt{6}$&--\\
	\hline
	$7$&--&$5+\sqrt{3}$&$5$, $6+\sqrt{6}$&$5$, $6+\sqrt{7}$\\
	\hline
	$8$&$5+\sqrt{2}$&--&--&$12+4\sqrt{7}$\\
	\hline
	$9$&--&$6+2\sqrt{3}$&--&$8+2\sqrt{7}$, $10+3\sqrt{7}$\\
	\hline
	$10$&$5$&$5$&--&--\\
	\hline
	$11$&--&--&--&--\\
	\hline
	\end{tabular}
\caption{All elements (up to conjugation and multiplication by units) with $ m $ partitions in $ K = \Q(\sqrt{D}) $ for $ D \equiv 2, 3 \pmod{4} $.}
\label{tabD23mod4}
\end{table}

\begin{table}[ht]
	\begin{tabular}{|l|r|r|r|r|}
	\hline
	\multicolumn{5}{|c|}{$ \alpha\in\O_K^+:\; p_K(\alpha) = m $}\\
	\hline
	&\multicolumn{4}{|c|}{$D$}\\
	\hline
	$m$&$5$&$13$&$17$&$21$\\
	\hline\hline
	$1$&$1$&$1$, $\frac{5+\sqrt{13}}{2}$&$1$, $\frac{5+\sqrt{17}}{2}$, $\frac{13+3\sqrt{17}}{2}$&$1$\\
	\hline
	$2$&$2$, $\frac{5+\sqrt{5}}{2}$&$2$, $\frac{7+\sqrt{13}}{2}$&$2$, $\frac{7+\sqrt{17}}{2}$, $5+\sqrt{17}$, $9+2\sqrt{17}$&$2$, $\frac{7+\sqrt{21}}{2}$\\
	\hline
	$3$&--&$3$, $5+\sqrt{13}$, $\frac{13+3\sqrt{13}}{2}$&$3$, $13+3\sqrt{17}$, $17+4\sqrt{17}$&$3$\\
	\hline
	$4$&$3$, $\frac{7+\sqrt{5}}{2}$&$\frac{9+\sqrt{13}}{2}$&$\frac{9+\sqrt{17}}{2}$, $6+\sqrt{17}$, $\frac{15+3\sqrt{17}}{2}$&$\frac{9+\sqrt{21}}{2}$\\
	\hline
	$5$&--&$4$&$4$, $\frac{23+5\sqrt{17}}{2}$&$4$\\
	\hline
	$6$&--&$6+\sqrt{13}$&--&--\\
	\hline
	$7$&--&$\frac{11+\sqrt{13}}{2}$&$\frac{11+\sqrt{17}}{2}$, $\frac{31+7\sqrt{17}}{2}$&$\frac{11+\sqrt{21}}{2}$\\
	\hline
	$8$&$4$&$5$, $\frac{15+3\sqrt{13}}{2}$, $9+2\sqrt{13}$&$5$, $10+2\sqrt{17}$, $\frac{39+9\sqrt{17}}{2}$, $\frac{47+11\sqrt{17}}{2}$&$5$\\
	\hline
	$9$&$\frac{9+\sqrt{5}}{2}$&--&$7+\sqrt{17}$, $\frac{17+3\sqrt{17}}{2}$&$7+\sqrt{21}$\\
	\hline
	$10$&$5+\sqrt{5}$&--&--&--\\
	\hline
	$11$&--&--&$14+3\sqrt{17}$&--\\
	\hline
	\end{tabular}
\caption{All elements (up to conjugation and multiplication by units) with $ m $ partitions in $ K = \Q(\sqrt{D}) $ for $ D \equiv 1 \pmod{4} $.}
\label{tabD1mod4}
\end{table}

\end{document}